\documentclass{amsart}

\usepackage[T1]{fontenc}
\usepackage[utf8]{inputenc}
\usepackage{amsmath,amssymb}
\usepackage{aliascnt}
\usepackage{xurl}
\usepackage{hyperref}
\usepackage[nameinlink,noabbrev]{cleveref}
\hypersetup{hidelinks,
  pdftitle={A separably representable counterexample to Naimark's problem in ZFC},
  pdfauthor={Ryotaro Tanaka},
  pdfsubject={Naimark's problem and representations of C*-algebras},
  pdfkeywords={Naimark's problem, C*-algebras, irreducible representations,
    pure states, tracial states}}

\numberwithin{equation}{section}
\newtheorem{theorem}{Theorem}[section]
\newtheorem*{rosenbergtheorem}{Rosenberg's theorem}
\newaliascnt{lemma}{theorem}
\newtheorem{lemma}[lemma]{Lemma}
\aliascntresetthe{lemma}
\newaliascnt{proposition}{theorem}
\newtheorem{proposition}[proposition]{Proposition}
\aliascntresetthe{proposition}
\newaliascnt{corollary}{theorem}
\newtheorem{corollary}[corollary]{Corollary}
\aliascntresetthe{corollary}
\crefname{theorem}{Theorem}{Theorems}
\Crefname{theorem}{Theorem}{Theorems}
\crefname{lemma}{Lemma}{Lemmas}
\Crefname{lemma}{Lemma}{Lemmas}
\crefname{proposition}{Proposition}{Propositions}
\Crefname{proposition}{Proposition}{Propositions}
\crefname{corollary}{Corollary}{Corollaries}
\Crefname{corollary}{Corollary}{Corollaries}

\newcommand{\C}{\mathbb C}
\newcommand{\N}{\mathbb N}

\newcommand{\B}{\mathcal B}
\newcommand{\K}{\mathcal K}
\newcommand{\U}{\mathcal U}

\newcommand{\ip}[2]{\langle #1,#2\rangle}
\newcommand{\norm}[1]{\lVert #1\rVert}

\DeclareMathOperator{\ran}{ran}

\DeclareMathOperator{\id}{id}
\newcommand{\stlim}{\mathop{s\mathchar`-\mathrm{lim}}\displaylimits}
\newcommand{\rt}{\mathrm r}

\title[Naimark's problem in ZFC]{A separably representable counterexample
  to Naimark's problem in ZFC}
\author[R. Tanaka]{Ryotaro Tanaka}
\address[R. Tanaka]{Katsushika Division, Institute of Arts and Sciences,
Tokyo University of Science, Tokyo 125-8585, Japan}
\email{r-tanaka@rs.tus.ac.jp}
\thanks{The author was supported by JSPS KAKENHI Grant Number JP24K06788.}

\subjclass[2020]{Primary 46L05; Secondary 46L30, 03E50}
\keywords{Naimark's problem, $C^*$-algebras, irreducible representations,
  pure states, tracial states}
\date{}

\begin{document}
\begin{abstract}
We construct in ZFC a unital, simple, infinite-dimensional $C^*$-algebra
whose nonzero irreducible representations form a single unitary equivalence
class, but which admits a faithful representation on a separable Hilbert
space. The algebra contains a unital copy of the canonical anticommutation
relation (CAR) algebra, and the normalized CAR trace has a unique extension
among all states. This extension
is tracial, and its GNS representation is separable and faithful, with weak
closure the hyperfinite $\mathrm{II}_1$ factor. In contrast, every nonzero
irreducible representation acts on a Hilbert space of density $2^{\aleph_0}$.
The construction separates the added unitaries into shell terms, controlled
by finite CAR relations, and rank-one defect terms. These relations determine
every irreducible representation of the generated algebra and every extension
of the CAR trace. The algebra has norm density $2^{\aleph_0}$; consequently,
the continuum hypothesis (CH) is equivalent over ZFC to the existence of a
counterexample to Naimark's problem of norm density $\aleph_1$.
\end{abstract}
\maketitle

\section{Introduction}\label{sec:main}

Naimark's problem asks whether a $C^*$-algebra whose nonzero irreducible
representations are all unitarily equivalent must be isomorphic to the
algebra $\K(K)$ of compact operators on some complex Hilbert space $K$
\cite{Naimark1951}.

A $C^*$-algebra is \emph{separably representable} if it admits a faithful
representation on a separable Hilbert space. The representation need not
be irreducible, nor the algebra norm separable. The relevant classical
obstruction is the following.

\begin{rosenbergtheorem}[{\cite[Theorem~4, p.~530]{Rosenberg}}]
Let $K$ be a nonzero separable Hilbert space, and let
$B\subseteq\B(K)$ be a nonzero $C^*$-algebra acting irreducibly on $K$.
If all nonzero irreducible representations of $B$ are unitarily equivalent,
then $B=\K(K)$.
\end{rosenbergtheorem}

Thus a counterexample cannot have a separable irreducible representation.
We construct one in ZFC with a faithful separable tracial representation,
and with unique state extension from a specified CAR subalgebra.

\begin{theorem}\label{thm:main}
There exists a unital, simple, infinite-dimensional $C^*$-algebra $A$ with exactly one unitary equivalence class of nonzero irreducible representations and a unique tracial state $\widetilde\tau$. The GNS representation of $\widetilde\tau$ is faithful and acts on a separable Hilbert space.

Moreover, $A$ contains a unital copy $j:C\hookrightarrow A$ of the CAR algebra such that its normalized trace $\tau$ has a unique extension among all states of $A$, namely $\widetilde\tau$.
\end{theorem}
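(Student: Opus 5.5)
The plan follows the Akemann--Weaver strategy, with the $\diamondsuit$-driven transfinite recursion replaced by a single ZFC construction of continuum size. Take the CAR algebra $C=\bigotimes_{n\in\N}M_2(\C)$ with its trace $\tau$. Let $\pi_\tau$ be the GNS representation on the separable space $L^2(C,\tau)$, so that $\pi_\tau(C)''=R$ is the hyperfinite $\mathrm{II}_1$ factor. Inside $\B(L^2(C,\tau))$ I would adjoin a family of unitaries $\{u_\xi\}_{\xi<2^{\aleph_0}}$, one for each pair of pure states of $C$ that must be identified. Each $u_\xi$ is written as a \emph{shell term} plus a \emph{rank-one defect term}. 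The shell term is an infinite product of unitaries lying in finite-dimensional blocks $M_{2^{n_{k+1}}}\ominus M_{2^{n_k}}$; this product converges strongly and is chosen so that $\operatorname{Ad}u_\xi$ carries one prescribed pure state $\varphi_\xi$ of $C$ approximately to another, $\psi_\xi$, in the sense of Kishimoto--Ozawa--Sakai. The defect term is a compact perturbation that fixes the exact identity $\psi_\xi=\varphi_\xi\circ\operatorname{Ad}u_\xi$ on the generated algebra. I set $A=C^*(\pi_\tau(C)\cup\{u_\xi\})\subseteq\B(L^2(C,\tau))$. Then $A$ is unital and infinite-dimensional, and the inclusion $A\subseteq\B(L^2)$ gives a faithful separable representation for free. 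The content of the theorem is therefore to identify the states of $A$.

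Step one is uniqueness of the state extension of $\tau$. Let $\omega$ be any state of $A$ with $\omega\circ j=\tau$. The shell terms are asymptotically in the relative commutants of the finite blocks $M_{2^n}$. Uniqueness of the trace on each $M_{2^n}$, together with a conditional-expectation averaging over the unitary groups of the finite blocks, forces $\omega(x\,u_\xi\,y)=\tau\text{-}\langle\text{vector formula}\rangle$ for every word. For the defect terms, rank-one operators satisfy $\omega(\text{rank-one})=0$ whenever $\omega|_C=\tau$, because $\tau$ is diffuse: a rank-one projection is dominated by arbitrarily small projections of $C$ in the relevant corner estimates. Hence $\omega$ is determined and equals the vector state $\widetilde\tau=\langle\,\cdot\,\hat1,\hat1\rangle$. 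I would then check that this vector state is tracial, which amounts to showing that the shells preserve $\tau$ and the defects vanish under it. The GNS representation of $\widetilde\tau$ is the identity representation on $\overline{A\hat1}=L^2$, which is faithful. The same computation shows that every tracial state restricts to $\tau$ on $C$, since $C$ has a unique trace, and so equals $\widetilde\tau$.

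Step two is the classification of irreducible representations. Every pure state of $A$ restricts on $C$ to something whose behaviour is controlled by the finite CAR relations in the shells. I would aim for two claims. First, every irreducible representation of $A$ restricts to $C$ with a pure-state vector; this should follow by excision along the shell block structure. Second, the recursion, which enumerates all $2^{\aleph_0}$ pairs of pure states of $C$ and all pairs of earlier constructed states, guarantees that any two pure states of $A$ are conjugate by some $u_\xi$. The enumeration uses only $|P(C)|=2^{\aleph_0}$, so no $\diamondsuit$ is needed. Every pure state of $C$ has a unique pure extension, because the defects are chosen to kill the competing extensions. Hence all pure states of $A$ are unitarily equivalent. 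Simplicity follows: any ideal $I$ must contain $\ker\pi$ for the unique irreducible class, and faithfulness of the tracial GNS representation then forces $I=0$ or $I=A$.

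The main obstacle is the compatibility between the two demands, exactness of the conjugations $\varphi_\xi\circ\operatorname{Ad}u_\xi=\psi_\xi$ and uniqueness of the $\tau$-extension, while continuum many unitaries are added at once. In a non-diagonal argument, later generators could destroy the uniqueness of extensions arranged earlier. To handle this, I would first attempt an inductive invariant: every finite set of generators lies in a norm-separable subalgebra $A_F$ in which pure states of $C$ extend uniquely and $\tau$ extends uniquely. Since uniqueness of extensions passes to inductive limits, this invariant transfers to all of $A$. The hard verification is the single-step lemma that adding one shell-plus-defect unitary preserves the invariant, and that is where the finite CAR relation estimates must be carried out carefully.
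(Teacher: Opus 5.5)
There is a genuine gap, and it begins with the ambient space. You build $A$ inside $\B(L^2(C,\tau))$. But $\pi_\tau(C)'$ is again a $\mathrm{II}_1$ factor, so no unit vector of $L^2(C,\tau)$ has a pure vector state on $C$: the projection onto its cyclic subspace would be a minimal projection of $\pi_\tau(C)'$. A rank-one operator on $L^2(C,\tau)$ therefore has no vector realizing $\varphi_\xi$ to send to one realizing $\psi_\xi$, so your defect terms cannot identify pure states.

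The paper puts the defect $\theta_{\zeta_{\rt},\zeta_i}$ in the reduced atomic representation $H=\bigoplus_i H_i$. There $\zeta_i$ realizes $\phi_i$ and spans the common range of the excising flag $\Pi(p_{i,n})$, and $L_i\zeta_i=\zeta_{\rt}$. Irreducibility of $A\subseteq\B(H)$ then follows from Schur's lemma. The separable faithful representation comes only afterwards, and in it the defect disappears. For any state $\Phi$ with $\Phi\circ j=\tau$, the estimate $\norm{\sigma(p)\sigma(b)\Omega}^2\le\norm{b}^2\tau(p)$, with $\tau(p_{i,n})=\tau(q_n)=2^{-n}$, kills the represented common ranges on the $C$-cyclic subspace. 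The finite identities $L_i(1-j(p_{i,N}))=j(s_{i,N})$ and $L_i^*(1-j(q_N))=j(s_{i,N})^*$ then show that $\rho_\Phi(L_i)=\stlim_{N}\sigma_\Phi(s_{i,N})$ preserves that subspace. Hence it is all of $K_\Phi$, and GNS uniqueness determines $\Phi$.

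Your substitute, ``$\omega(\text{rank-one})=0$ since $\tau$ is diffuse'', is not correct. A simple unital infinite-dimensional $A\subseteq\B(L^2)$ contains no nonzero compact operators. Moreover, the vector state at $\hat{1}$ evaluates a rank-one term as $\ip{\theta_{x,y}\hat{1}}{\hat{1}}=\ip{\hat{1}}{y}\ip{x}{\hat{1}}$, which is generally nonzero, so it would not be the extension you claim. What vanishes is the represented common-range projection, which is not an element of $A$. Traciality is then proved from uniqueness: $\widetilde\tau\circ\operatorname{Ad}j(u)$ is another extension of $\tau$. It is not proved by checking words.

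Step two contains no proof, and the recursion cannot supply one. ``One irreducible class'' cannot be an inductive invariant. By Rosenberg's theorem, no norm-separable stage containing the unital copy of $C$ has a single class. Pure states of the final algebra are also invisible at earlier stages, which is exactly why Akemann--Weaver needed $\diamondsuit$; enumerating the pure states of $C$ does not remove this.

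The missing idea is the paper's capture argument for an \emph{arbitrary} irreducible $\rho$ of the final algebra. All $\mathfrak c$ unitaries are adjoined at once, one for each irreducible class of $C$. Kishimoto--Ozawa--Sakai is used only to transport the root flag $q_n$ to $p_{i,n}=\alpha_i(q_n)$, which excises $\phi_i$ and has the same traces, together with shell partial isometries $w_{i,n}\in C$ from $e_{i,n}$ to $f_n$.

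In $\rho$, the finite identities give $\rho(L_i)=S_i^\sigma+D_i$. Here $S_i^\sigma$ is the strong limit of $\sigma(s_{i,N})$ with $\sigma=\rho\circ j$, and $D_i$ maps the common range $P_i^\sigma$ onto $P_{\rt}^\sigma$. If every $P_i^\sigma$ vanished, then $\sigma(C)'=\rho(A)'=\C1$. So $\sigma$ would be irreducible and equivalent to some $\pi_k$, whose $k$-flag has common range $\C\xi_k\ne0$, a contradiction. Hence $P_{\rt}^\sigma$ contains a unit vector $\eta_{\rt}$. By compression the vectors $\rho(L_k)^*\eta_{\rt}$ have states $\phi_k$, and they generate mutually orthogonal copies of every $H_k$. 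The resulting isometry $H\to K$ intertwines shells and defects, so it is onto by irreducibility. This is what your ``excision along the shell block structure'' must become.

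Finally, simplicity should come from the unique irreducible class being faithful, since irreducibles separate points. Faithfulness of the tracial representation does not give it.
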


The algebra in \cref{thm:main} is therefore separably representable:
\[
 A\hookrightarrow\B(\ell^2(\N)).
\]
Its unique irreducible class, however, is realized only on nonseparable
Hilbert spaces. In fact, the faithful tracial representation has weak
closure the hyperfinite $\mathrm{II}_1$ factor; see \cref{cor:weak-closure}. It is not isomorphic to $\K(K)$ for any Hilbert
space $K$, since a unital algebra of compact operators is
finite-dimensional. Thus the theorem gives a negative answer to
Naimark's question without an additional set-theoretic hypothesis.

\subsection*{Relation to earlier constructions}
Glimm's theorem implies that a simple, separable $C^*$-algebra is either an
algebra of compact operators or has continuum many inequivalent
irreducible representations \cite{Glimm1961}; see also
\cite[\S5.5]{Farah2019}. Akemann and Weaver constructed a counterexample
to Naimark's problem under Jensen's principle $\diamondsuit_{\aleph_1}$
\cite{AW2004}. Their Proposition~6 states that a counterexample cannot be
generated by fewer than $\mathfrak c=2^{\aleph_0}$ elements. Its norm
density is therefore at least $\mathfrak c$, since any norm-dense subset
generates the algebra. Their Corollary~7 proves that the existence of an
$\aleph_1$-generated counterexample is independent of ZFC. This cardinality
restriction is essential: the result does not establish independence of
the existence of an unrestricted counterexample.

Calder\'on and Farah obtained counterexamples from
$\diamondsuit^{\mathrm{Cohen}}+\mathrm{CH}$, a hypothesis consistent with the
failure of Jensen's full diamond principle
\cite[Theorem~5.4 and Appendix~B]{CF2023}. They left open the existence of
a counterexample in ZFC alone, and also under CH alone \cite[\S8]{CF2023}.
The construction in \cref{thm:main} requires neither additional hypothesis.

Under diamond, Farah and Hirshberg constructed simple nuclear algebras
with prescribed finite or countably infinite numbers of irreducible
equivalence classes, together with further rigidity properties
\cite{FH2017}. Vaccaro showed that the tracial state space of a
counterexample can be any metrizable Choquet simplex, again under diamond
\cite{Vaccaro2018,Vaccaro2020}. In particular, the existence of a counterexample with a unique trace
under additional axioms is already known.
The tracial assertion of \cref{thm:main} combines unconditional existence
with a separable faithful GNS representation and uniqueness of the extension
of the CAR trace \emph{among all states}, rather than only among traces.

The distinction between faithful and irreducible representations is equally
important. Calder\'on and Farah's Theorem~B gives, under
$\diamondsuit^{\mathrm{Cohen}}+\mathrm{CH}$, separably represented simple
unital algebras with exactly $m$ irreducible equivalence classes for each
finite $m\ge2$ \cite{CF2023}. The present construction has one class.
Its separable tracial representation does not realize that class:
\cref{cor:no-separable-irreducible} shows that no nonzero irreducible
representation of $A$ is separable. In fact, its faithful tracial
representation contains no nonzero irreducible closed subrepresentation.

\subsection*{A clarification of separability}
In the singleton-spectrum setting, Rosenberg's theorem requires a
\emph{separable irreducible} representation, not merely a faithful
separable one. Some summaries use broader wording: Akemann--Weaver write
``no separable (indeed, no separably acting) counterexamples''
\cite[p.~7522]{AW2004}, Farah writes ``faithfully represented''
\cite[p.~155]{Farah2019}, and the abstract of Calder\'on--Farah refers to
``a separably represented, simple $C^*$-algebra'' \cite{CF2023}.
Read as excluding every faithfully separably represented counterexample,
these formulations are incompatible with \cref{thm:main}.
The parenthetical ``in fact, separably representable'' in the author's
earlier preprint \cite[p.~1]{Tanaka2025} is likewise too broad: it should
refer to the existence of an \emph{irreducible} representation on a
separable Hilbert space.

The precise later results retain irreducibility hypotheses:
\cite[Corollary~5.5.6, p.~158]{Farah2019} assumes an irreducible
representation on a Hilbert space of density less than $\mathfrak c$,
and \cite[Proposition~7.2]{CF2023} assumes that all irreducible
representations act on separable Hilbert spaces. Our algebra satisfies
neither hypothesis; its faithful separable tracial representation cannot
be substituted for an irreducible representation in these theorems.

The two representation sizes are determined exactly in \cref{cor:cardinal}:
\[
 |A|=\operatorname{dens}(A)=\operatorname{dens}(H)=\mathfrak c,
 \qquad H_\tau\text{ is separable and infinite-dimensional}.
\]
Here $H$ carries the unique irreducible class and $H_\tau$ the faithful
tracial representation. Combined with the Akemann--Weaver lower bound,
this gives the equivalence over ZFC between CH and the existence of a
counterexample of norm density $\aleph_1$ (\cref{cor:ch}). Neither CH
nor a prediction principle is used in the construction.

\subsection*{Outline of the proof}
The principal difficulty is to control representations of the final algebra.
Making selected pure states equivalent in a larger algebra does not suffice:
the larger algebra has its own pure states, which must also be treated.
This issue is explicit in Akemann--Weaver's construction and in Weaver's
account \cite{AW2004,Weaver2007}. Our approach is to fix a concrete
algebra and prove, from finite relations among its generators, a statement
about an arbitrary irreducible representation of that algebra.

The starting algebra is the CAR algebra $C$, with its normalized trace
$\tau$. A product pure state $\phi_{\rt}$ is excised by a decreasing sequence
of projections $q_n$ satisfying
\[
 \phi_{\rt}(q_n)=1,\qquad \tau(q_n)=2^{-n}.
\]
Choose pure states $\phi_i$, one in each irreducible GNS class of $C$.
The pure-state homogeneity theorem of Kishimoto--Ozawa--Sakai
\cite{KOS} provides approximately inner automorphisms transporting the
product-state data to each $\phi_i$. The resulting projections $p_{i,n}$
satisfy the corresponding compression identities, while retaining trace
$2^{-n}$. General pure-state excision is due to
Akemann--Anderson--Pedersen \cite{AAP1986}; for these particular CAR
projections we use a direct matrix calculation.

Differences of consecutive projections, called shells, are connected by
partial isometries $w_{i,n}\in C$. In the reduced atomic representation
$\Pi=\bigoplus_i\pi_i$, the strong sum of these partial isometries has
one-dimensional initial and final defects. A rank-one operator between
the corresponding pure-state vectors completes it to a unitary $L_i$.
Set
\[
 A=C^*(\Pi(C),\{L_i:i\in I\}),\qquad j(a)=\Pi(a)\quad(a\in C).
\]
The strong sums and rank-one defects are not separately adjoined. For
$s_{i,N}=\sum_{n=0}^{N-1}w_{i,n}\in C$, the construction gives the finite
identities
\[
 L_i(1-j(p_{i,N}))=j(s_{i,N}),\qquad
 L_i^*(1-j(q_N))=j(s_{i,N})^*.
\]
These identities can be transported to every representation of $A$;
they are the common input to both parts of the proof. Strong limits are
then formed in that representation, not transported from the atomic model.

The essential step is \cref{thm:capture}. For an arbitrary irreducible
representation $\rho$ of $A$, the finite identities determine $\rho(L_i)$
off the common ranges of the represented projection sequences. If all
common ranges vanished, $\rho|_{j(C)}$ would itself be irreducible,
contradicting the common-range calculation for the chosen GNS classes.
A unit vector in a nonzero common range therefore exists. Its translates
under the added unitaries generate mutually orthogonal cyclic
$C$-subspaces, one for each GNS class. The associated GNS intertwiners
assemble into an isometry from the atomic model. We compute the shell
and defect actions on its range and show that this isometry intertwines
both the generators and their adjoints. Its nonzero range therefore
reduces $\rho(A)$; irreducibility makes the isometry surjective.

The same projection sequences have a different effect on the trace.
Begin with an \emph{arbitrary state extension} $\Phi$ of $\tau$ to $A$.
On the cyclic subspace generated by $C$, the estimate
\[
 \|\rho_\Phi(j(p_{i,n}))\rho_\Phi(j(b))\Omega_\Phi\|^2
 \le \|b\|^2\tau(p_{i,n})=2^{-n}\|b\|^2
\]
forces every common fixed-space projection to vanish. The shell relations
then show that this cyclic subspace reduces all the generators of $A$.
It is therefore the full GNS space of $\Phi$, and is separable.
GNS uniqueness determines the images of the additional generators from
those of $C$, proving uniqueness of the state extension.
Only after this step do we prove traciality, by showing that the centralizer
contains the generators. Simplicity supplies faithfulness.

\subsection*{Relation to the preliminary report}
The Brief Report \emph{Naimark's problem} \cite{NaimarkBrief2026} is an
AI-assisted preliminary account of the same counterexample, its
irreducible representations, the extension of the CAR trace, and the
density and CH consequences. The present article gives a self-contained
treatment of that research with reorganized and expanded proofs and a
fuller comparison with earlier work; it also records the weak-closure
consequence in \cref{cor:weak-closure}. The report is a non-peer-reviewed
account, cited to document the provenance of the results and proof
strategy, not as an independent verification.

\subsection*{Organization}
Section~\ref{sec:flags} constructs the pure-state projections and their
shells. Section~\ref{sec:construction} defines the additional unitaries and
the algebra $A$, and Section~\ref{sec:capture} proves the classification of
its irreducible representations. Section~\ref{sec:tracial} establishes
unique extension of the CAR trace and completes the proof of the main
theorem. Section~\ref{sec:consequences} gives the representation-size and
cardinality consequences.

\subsection*{Conventions}
Throughout, algebras and Hilbert spaces are complex, and Hilbert-space
inner products are linear in the first variable. For vectors $x,y$, put
\[
 \theta_{x,y}z=\ip{z}{y}x,
 \qquad \theta_{x,y}^*=\theta_{y,x}.
\]
Thus $\theta_{x,x}$ is the orthogonal projection onto $\C x$ when
$\norm{x}=1$, and the vector state of a unit vector $\xi$ in a
representation $\pi$ is $a\mapsto\ip{\pi(a)\xi}{\xi}$.
A representation is a $*$-homomorphism; irreducible representations are
understood to be nonzero, with no separability assumption on their Hilbert
spaces. An irreducible representation of a unital algebra is automatically
unital. We write $\stlim$ for a strong operator limit. GNS uniqueness and
Schur's lemma are used in their usual Hilbert-space forms
\cite[\S\S1.10, 3.4--3.6]{Farah2019}. All section and result numbers in
\cite{Farah2019} refer to the first edition. The distinguished index
$\rt$ will denote the root state; projection sequences are indexed by
$n=0,1,\ldots$.

\section{Pure-state projections in the CAR algebra}\label{sec:flags}

We construct decreasing projections that excise each chosen pure state,
while their traces tend to zero. Their differences will supply the shell
links used in Section~\ref{sec:construction}.

\subsection{The product-state projections}
Let
\[
 C=\overline{\bigcup_{n\ge0}C_n},\qquad C_n=M_2(\C)^{\otimes n},
 \qquad C_n\longrightarrow C_{n+1},\quad a\longmapsto a\otimes1_2.
\]
The CAR algebra $C$ is unital, simple, separable, and infinite-dimensional.
Its unique tracial state is
\[
 \tau(a)=2^{-n}\operatorname{Tr}(a)\qquad(a\in C_n);
\]
see \cite[\S2.2 and Corollary~4.1.9]{Farah2019}. Write $e_{00}$ for the
first diagonal matrix unit of $M_2(\C)$, let $\phi_{\rt}$ be the product
state selecting the first coordinate in each tensor factor, and set
$q_n=e_{00}^{\otimes n}$. Then
\begin{equation}\label{eq:root-data}
 q_0=1,\qquad q_{n+1}\le q_n,\qquad
 \phi_{\rt}(q_n)=1,\qquad \tau(q_n)=2^{-n}.
\end{equation}
The state $\phi_{\rt}$ is pure: its restrictions to the matrix stages are
pure, and these stages have dense union
\cite[Lemma~5.5.3]{Farah2019}.

\begin{lemma}[Product-state compression]\label{lem:compression}
For every $a\in C$,
\begin{equation}\label{eq:root-compression}
 \norm{q_na q_n-\phi_{\rt}(a)q_n}\longrightarrow0
 \qquad(n\to\infty).
\end{equation}
\end{lemma}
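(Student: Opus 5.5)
The plan is to prove \eqref{eq:root-compression} first for $a$ in the dense subalgebra $\bigcup_m C_m$, where it is an exact identity for large $n$. After that, a uniform $\varepsilon/3$ approximation extends it to all of $C$. The key observation is that the map $a\mapsto q_naq_n-\phi_{\rt}(a)q_n$ is linear and has norm at most $2$ for every $n$. Indeed, $\norm{q_naq_n}\le\norm a$ and $\norm{\phi_{\rt}(a)q_n}\le\norm a$, since $\phi_{\rt}$ is a state and $q_n$ is a projection. So once the claim holds on a dense set, it holds everywhere.

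For the exact identity, fix $a\in C_m$ and take $n\ge m$. Under the embedding $C_m\to C_n$ we have $a=a\otimes 1_{2}^{\otimes(n-m)}$, and $q_n=q_m\otimes e_{00}^{\otimes(n-m)}$. Hence
\[
 q_naq_n=(q_maq_m)\otimes e_{00}^{\otimes(n-m)}.
\]
Now $q_m=e_{00}^{\otimes m}$ is a rank-one projection in $C_m=M_{2^m}(\C)$, onto the vector $\delta_0^{\otimes m}$. Therefore $q_maq_m=\ip{a\delta_0^{\otimes m}}{\delta_0^{\otimes m}}\,q_m$. This scalar is exactly $\phi_{\rt}(a)$, because $\phi_{\rt}$ restricted to $C_m$ is the vector state of $\delta_0^{\otimes m}$ (the product state selecting the first coordinate). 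Substituting back gives $q_naq_n=\phi_{\rt}(a)q_n$ for all $n\ge m$.

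For general $a\in C$ and $\varepsilon>0$, choose $m$ and $b\in C_m$ with $\norm{a-b}<\varepsilon/2$. For $n\ge m$, the bound of $2$ on the linear map gives
\[
 \norm{q_naq_n-\phi_{\rt}(a)q_n}\le 2\norm{a-b}+\norm{q_nbq_n-\phi_{\rt}(b)q_n}<\varepsilon.
\]
The only real bookkeeping is checking that $\phi_{\rt}$ is consistent with the inclusions, so that its restriction to each $C_m$ is that vector state; the rest is routine. In fact the argument gives more than convergence: the difference is eventually zero on the dense subalgebra.
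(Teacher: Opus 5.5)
Your proof is correct and is essentially the paper's argument: the exact identity $q_nbq_n=\phi_{\rt}(b)q_n$ for $b\in C_m$ and $n\ge m$, followed by the estimate $\norm{q_naq_n-\phi_{\rt}(a)q_n}\le2\norm{a-b}$ for $n\ge m$ and density of $\bigcup_m C_m$. Your reference to an $\varepsilon/3$ argument is a harmless slip, since the $\varepsilon/2$ estimate you actually write already suffices.
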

\begin{proof}
For $b\in C_m$ and $n\ge m$, tensor-product compression gives
\[
 q_n(b\otimes1_{2^{n-m}})q_n=\phi_{\rt}(b)q_n.
\]
Consequently, for arbitrary $a\in C$,
\[
 \norm{q_na q_n-\phi_{\rt}(a)q_n}
 \le \norm{q_n(a-b)q_n}+|\phi_{\rt}(a-b)|
 \le2\norm{a-b}\qquad(n\ge m).
\]
Approximation by a matrix-stage element $b$ proves the assertion.
\end{proof}
This is the excision property of a pure state
\cite[Proposition~2.2]{AAP1986}, here realized by projections with the
explicit traces in \eqref{eq:root-data}.

\subsection{Transport to the other pure states}
Choose pure states $(\phi_i)_{i\in I}$ of $C$, with GNS triples
$(H_i,\pi_i,\xi_i)$, so that $(\pi_i)_{i\in I}$ contains exactly one
representative of each irreducible unitary equivalence class, with
$\rt\in I$ corresponding
to $\phi_{\rt}$. Such a family is obtained by choosing representatives
from the pure state space: every unit vector in an irreducible
representation is cyclic and has a pure vector state
\cite[Lemma~1.10.9 and Proposition~3.6.5]{Farah2019}.

We use the following form of the Kishimoto--Ozawa--Sakai homogeneity
theorem \cite[Theorems~1.1 and~2.1]{KOS}: pure states with the same GNS
kernel on a separable $C^*$-algebra are related by an asymptotically inner
automorphism whose implementing path starts at $1$. In our unital algebra
$C$, simplicity gives $\ker\pi_i=\ker\pi_{\rt}=0$. Thus we may choose
\begin{equation}\label{eq:alpha}
 \alpha_i\in\operatorname{AInn}_0(C),\qquad
 \phi_i\circ\alpha_i=\phi_{\rt},\qquad \alpha_{\rt}=\id_C.
\end{equation}
Here the notation means that there is a norm-continuous path
$t\mapsto u_{i,t}\in\U(C)$, $t\ge0$, such that
\begin{equation}\label{eq:asymptotic-inner}
 u_{i,0}=1,\qquad
 \lim_{t\to\infty}\norm{\alpha_i(a)-u_{i,t}au_{i,t}^*}=0
 \quad(a\in C).
\end{equation}
Only the following finite-set consequence is used to construct the links:
for every finite $F\subset C$ and every $\varepsilon>0$,
\begin{equation}\label{eq:approximate-inner}
 \text{there is }u\in\U(C)\text{ such that }
 \norm{\alpha_i(a)-uau^*}<\varepsilon\quad(a\in F).
\end{equation}
In particular, these implementing unitaries belong to the original
algebra $C$.

Define the transported projections and their shells by
\begin{equation}\label{eq:flags}
 p_{i,n}=\alpha_i(q_n),\qquad
 f_n=q_n-q_{n+1},\qquad
 e_{i,n}=p_{i,n}-p_{i,n+1}=\alpha_i(f_n).
\end{equation}
For fixed $i$, the $p_{i,n}$ decrease from $1$, and the $e_{i,n}$ are
pairwise orthogonal. We call $(p_{i,n})_{n\ge0}$ the $i$-flag. Applying
\cref{lem:compression} to $\alpha_i^{-1}(a)$ gives
\begin{align}
 \phi_i(p_{i,n})&=1,\notag\\
 \norm{p_{i,n}ap_{i,n}-\phi_i(a)p_{i,n}}
 &=\norm{q_n\alpha_i^{-1}(a)q_n
       -\phi_{\rt}(\alpha_i^{-1}(a))q_n}
 \longrightarrow0.\label{eq:transported-compression}
\end{align}
GNS uniqueness also gives a pointed unitary $Y_i:H_{\rt}\to H_i$ with
\begin{equation}\label{eq:twisted-gns}
 Y_i\xi_{\rt}=\xi_i,\qquad
 Y_i\pi_{\rt}(a)Y_i^*=\pi_i(\alpha_i(a))\quad(a\in C).
\end{equation}
This identifies $\pi_{\rt}$ with the \emph{twisted} representation
$\pi_i\circ\alpha_i$, not with $\pi_i$ itself.

\subsection{Shell links}
The partial isometries below send each transported shell to the root shell.

\begin{lemma}[Links and their traces]\label{lem:links}
There are elements $w_{i,n}\in C$ such that
\begin{equation}\label{eq:supports}
 w_{i,n}^*w_{i,n}=e_{i,n},\qquad
 w_{i,n}w_{i,n}^*=f_n,\qquad w_{\rt,n}=f_n.
\end{equation}
For these same choices,
\begin{equation}\label{eq:flag-trace}
 \tau(p_{i,n})=2^{-n}\qquad(i\in I,\ n\ge0).
\end{equation}
\end{lemma}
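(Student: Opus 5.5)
The plan is to first prove the trace identity \eqref{eq:flag-trace}, and then use it to build the links by Murray--von Neumann equivalence in $C$.

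\textbf{Trace identity.} Since $\alpha_i$ is an automorphism of $C$, $\tau\circ\alpha_i$ is again a tracial state of $C$. Uniqueness of the trace on $C$ then gives $\tau\circ\alpha_i=\tau$. Hence
\[
\tau(p_{i,n})=\tau(q_n)=2^{-n}
\]
by \eqref{eq:root-data}. The same argument gives $\tau(e_{i,n})=\tau(f_n)=2^{-n-1}$. No approximate-innerness is needed for this step, and the traces do not depend on the choice of links. This explains the phrase ``for these same choices'': \eqref{eq:flag-trace} holds regardless.

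\textbf{Links for $i\ne\rt$.} I need $e_{i,n}\sim f_n$ in $C$. There are two ways to get this.

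The first uses the structure of $C$. Projections in the CAR algebra are classified up to Murray--von Neumann equivalence by their trace: $K_0(C)\cong\mathbb Z[1/2]$ via $\tau$, with cancellation. So equal traces already give $e_{i,n}\sim f_n$.

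The second is more self-contained and is the route I would write. Apply \eqref{eq:approximate-inner} with $F=\{f_n\}$ and $\varepsilon<1$ to obtain $u\in\U(C)$ with $\norm{e_{i,n}-uf_nu^*}<1$. Two projections at distance less than $1$ are unitarily equivalent in $C$: by the standard construction $z=e\,uf_nu^*+(1-e)(1-uf_nu^*)$, $z$ is invertible, and its polar part $v\in\U(C)$ satisfies $v\,uf_nu^*\,v^*=e_{i,n}$. Therefore $e_{i,n}=Vf_nV^*$ for some unitary $V\in C$.

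Set $w_{i,n}=f_nV^*=V^*e_{i,n}$. Then
\[
w_{i,n}^*w_{i,n}=Vf_nV^*=e_{i,n},\qquad w_{i,n}w_{i,n}^*=f_nV^*Vf_n=f_n .
\]

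\textbf{Links for $i=\rt$.} Here $\alpha_{\rt}=\id_C$, so $e_{\rt,n}=f_n$, and I take $w_{\rt,n}=f_n$. This satisfies \eqref{eq:supports} trivially.

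The only delicate point is ensuring that the partial isometries lie in $C$ itself rather than in a bidual or weak closure. This is exactly why I prefer the unitary-perturbation argument: the implementing unitaries from \eqref{eq:approximate-inner} belong to $C$, and continuous functional calculus keeps the polar part in $C$.
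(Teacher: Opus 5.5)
Your proposal is correct. Your construction of the links is the paper's: approximate innerness gives $\norm{e_{i,n}-uf_nu^*}<1$, and the projection perturbation lemma then produces a unitary of $C$ conjugating $uf_nu^*$ to $e_{i,n}$. The paper cites that lemma (Farah, Lemma~1.5.7(2)), while you reprove it via $z=e\,p+(1-e)(1-p)$ and its polar part. Your $w_{i,n}=V^*e_{i,n}$ with $V=vu$ is the paper's $u^*ve_{i,n}$ up to renaming.

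The genuine difference is the trace identity, which you prove first and independently of the links.

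\begin{itemize}
\item The paper derives it from the links. Traciality gives $\tau(e_{i,n})=\tau(w_{i,n}^*w_{i,n})=\tau(w_{i,n}w_{i,n}^*)=\tau(f_n)$, and telescoping $p_{i,N}=1-\sum_{n<N}e_{i,n}$ yields $\tau(p_{i,N})=\tau(q_N)=2^{-N}$. This uses only that $\tau$ is a trace.
\item You note instead that $\tau\circ\alpha_i$ is a tracial state and invoke uniqueness of the trace on $C$. This gives $\tau(p_{i,n})=\tau(q_n)$ in one line and shows the formula does not depend on the choice of links at all.
\end{itemize}

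Uniqueness is not even essential for your route: choosing unitaries $u_k\in C$ with $\norm{\alpha_i(a)-u_kau_k^*}\to0$ gives $\tau(\alpha_i(a))=\lim_k\tau(u_kau_k^*)=\tau(a)$ directly. Your $K$-theoretic alternative for the links is also valid, but heavier than needed.
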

\begin{proof}
Fix $i\ne\rt$ and $n$. By \eqref{eq:approximate-inner}, choose
$u\in\U(C)$ with $\norm{e_{i,n}-uf_nu^*}<1$. The projection
perturbation lemma \cite[Lemma~1.5.7(2)]{Farah2019} gives $v\in\U(C)$
satisfying $ve_{i,n}v^*=uf_nu^*$. Set $w_{i,n}=u^*ve_{i,n}$. Then
\[
 \begin{aligned}
 w_{i,n}^*w_{i,n}
 &=e_{i,n}v^*uu^*ve_{i,n}=e_{i,n},\\
 w_{i,n}w_{i,n}^*
 &=u^*ve_{i,n}v^*u=f_n.
 \end{aligned}
\]
For $i=\rt$ take $w_{\rt,n}=f_n$. Since $\tau$ is tracial,
\[
 \tau(e_{i,n})=\tau(w_{i,n}^*w_{i,n})
 =\tau(w_{i,n}w_{i,n}^*)=\tau(f_n).
\]
Telescoping gives
\[
 \tau(p_{i,N})
 =1-\sum_{n=0}^{N-1}\tau(e_{i,n})
 =1-\sum_{n=0}^{N-1}\tau(q_n-q_{n+1})
 =\tau(q_N)=2^{-N}.
\]
\end{proof}
For later use, put $s_{i,N}=\sum_{n=0}^{N-1}w_{i,n}\in C$. Orthogonality
of the two families of supports gives
\begin{equation}\label{eq:finite-car-sums}
 s_{i,N}^*s_{i,N}=1-p_{i,N},\qquad
 s_{i,N}s_{i,N}^*=1-q_N,\qquad \norm{s_{i,N}}\le1.
\end{equation}

\subsection{Common ranges in a representation}
The lower index of a common-range projection will specify the flag;
its superscript will specify the representation. In particular, set
\[
 P_i^{(k)}=\stlim_{n\to\infty}\pi_k(p_{i,n})\in\B(H_k).
\]
Thus $P_i^{(k)}$ is the common-range projection of the $i$-flag in the
$k$th GNS representation.

\begin{lemma}[Common-range compression]\label{lem:fixed-space}
Let $\sigma:C\to\B(K)$ be a unital representation, and put
\[
 P_i^\sigma=\stlim_{n\to\infty}\sigma(p_{i,n}),\qquad
 \ran P_i^\sigma=\bigcap_{n\ge0}\sigma(p_{i,n})K.
\]
Then
\begin{equation}\label{eq:fixed-compression}
 P_i^\sigma\sigma(a)P_i^\sigma
 =\phi_i(a)P_i^\sigma\qquad(a\in C).
\end{equation}
For the chosen irreducible representations,
\begin{equation}\label{eq:gns-common-ranges}
 P_i^{(k)}=
 \begin{cases}
  \theta_{\xi_k,\xi_k},&i=k,\\
  0,&i\ne k.
 \end{cases}
\end{equation}
More generally, $P_i^\sigma=0$ if $\sigma$ is irreducible and inequivalent
to $\pi_i$.
\end{lemma}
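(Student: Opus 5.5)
The plan has three steps: derive the compression identity \eqref{eq:fixed-compression} from \eqref{eq:transported-compression}, compute the common ranges in the chosen GNS representations, and settle the inequivalent case with a short state-theoretic argument.

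\emph{Step 1: the compression identity.} Since the $\sigma(p_{i,n})$ decrease, they converge strongly to the projection $P_i^\sigma$ onto $\bigcap_n\sigma(p_{i,n})K$. For $n$ large we have $P_i^\sigma\le\sigma(p_{i,n})$, so $P_i^\sigma=P_i^\sigma\sigma(p_{i,n})=\sigma(p_{i,n})P_i^\sigma$. Hence
\[
 P_i^\sigma\sigma(a)P_i^\sigma-\phi_i(a)P_i^\sigma
 =P_i^\sigma\sigma\bigl(p_{i,n}ap_{i,n}-\phi_i(a)p_{i,n}\bigr)P_i^\sigma .
\]
The norm of the right side is at most $\norm{p_{i,n}ap_{i,n}-\phi_i(a)p_{i,n}}$, which tends to $0$ by \eqref{eq:transported-compression}. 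This gives \eqref{eq:fixed-compression}.

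\emph{Step 2: irreducible $\sigma$ and the case $i=k$.} Let $\sigma$ be irreducible and let $\eta,\zeta$ be unit vectors in $\ran P_i^\sigma$. By Step 1, the vector states of $\eta$ and $\zeta$ both equal $\phi_i$. In addition, $\ip{\sigma(a)\eta}{\zeta}=\phi_i(a)\ip{\eta}{\zeta}$.
\begin{itemize}
\item If $\eta\perp\zeta$, then $\zeta$ is orthogonal to $\sigma(C)\eta$. But $\eta$ is cyclic by irreducibility, so $\zeta=0$, a contradiction. Hence $\ran P_i^\sigma$ has dimension at most one.
\item If $\ran P_i^\sigma\ne0$, then $\sigma$ has a vector state equal to $\phi_i$. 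By GNS uniqueness, $\sigma$ is then equivalent to $\pi_i$.
\end{itemize}
This proves the general statement $P_i^\sigma=0$ for irreducible $\sigma$ inequivalent to $\pi_i$. Because the $\pi_k$ are pairwise inequivalent, it also gives the case $i\ne k$ of \eqref{eq:gns-common-ranges}. For $i=k$, note that $\phi_k(p_{k,n})=1$ means $\ip{\pi_k(p_{k,n})\xi_k}{\xi_k}=1$. A projection with this property fixes the unit vector $\xi_k$, so $\xi_k\in\ran P_k^{(k)}$. Combined with the dimension bound, $P_k^{(k)}=\theta_{\xi_k,\xi_k}$.

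\emph{Step 3: checking the GNS step.} The one point needing care is the claim in Step 2 that a vector state equal to $\phi_i$ forces equivalence with $\pi_i$. This holds because a unit vector $\eta$ in an irreducible representation is cyclic, so $(K,\sigma,\eta)$ is itself a GNS triple for $\phi_i$. GNS uniqueness then yields a unitary intertwining $\sigma$ with $\pi_i$. Everything else in the argument is routine.
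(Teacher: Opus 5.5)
Your proof is correct. It differs from the paper's mainly in how the case $i=k$ of \eqref{eq:gns-common-ranges} is obtained, and mildly in Step 1.

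For the compression identity, the paper lets the bounded products $\sigma(p_{i,n})\sigma(a)\sigma(p_{i,n})$ converge strongly to $P_i^\sigma\sigma(a)P_i^\sigma$. Your exact identity $P_i^\sigma=P_i^\sigma\sigma(p_{i,n})=\sigma(p_{i,n})P_i^\sigma$ turns this into a one-line norm estimate. It also avoids any appeal to joint strong continuity of multiplication.

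The more substantive difference is in the order of the argument. The paper computes only the root case by hand: $P\xi_{\rt}=\xi_{\rt}$, and $P$ maps the dense set $\pi_{\rt}(C)\xi_{\rt}$ into $\C\xi_{\rt}$. It then transports this to each $k$ with the twisted pointed unitary $Y_k$ of \eqref{eq:twisted-gns}, obtaining $P_k^{(k)}=Y_k\theta_{\xi_{\rt},\xi_{\rt}}Y_k^*$. Only afterwards does it prove the vanishing statement by GNS uniqueness.

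You reverse this order. You first show that in any irreducible $\sigma$ the range of $P_i^\sigma$ has dimension at most one, since two orthonormal vectors in it would contradict cyclicity via $\ip{\sigma(a)\eta}{\zeta}=\phi_i(a)\ip{\eta}{\zeta}$. From this you deduce the vanishing statement and the case $i\ne k$ by GNS uniqueness. You then place $\xi_k$ in $\ran P_k^{(k)}$ directly from $\phi_k(p_{k,n})=1$.

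Your route makes the twisted unitaries $Y_k$ unnecessary for this lemma, and the paper uses them nowhere else. It also gives a slightly stronger fact: common ranges are at most one-dimensional in every irreducible representation, not just in the chosen $\pi_k$. The paper's ordering presents the root computation as the model case, with the others as automorphic transports of it. Logically, though, the root argument can be run at each $k$ directly, which is in effect what you do.
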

\begin{proof}
The strong limit is the common-range projection
\cite[Lemma~3.1.3]{Farah2019}. By \eqref{eq:transported-compression},
\[
 \norm{\sigma(p_{i,n})\sigma(a)\sigma(p_{i,n})
       -\phi_i(a)\sigma(p_{i,n})}\longrightarrow0.
\]
The bounded products on the left converge strongly to the corresponding
products with $P_i^\sigma$, which proves \eqref{eq:fixed-compression}.

For the root GNS representation, write $P=P_{\rt}^{(\rt)}$. Since
\[
 \norm{(1-\pi_{\rt}(q_n))\xi_{\rt}}^2
 =1-\phi_{\rt}(q_n)=0,
\]
we have $P\xi_{\rt}=\xi_{\rt}$. The compression identity now gives
\[
 P\pi_{\rt}(a)\xi_{\rt}=\phi_{\rt}(a)\xi_{\rt}\qquad(a\in C).
\]
Since $\pi_{\rt}(C)\xi_{\rt}$ is dense, continuity of $P$ gives
$\ran P\subseteq\C\xi_{\rt}$. Together with $P\xi_{\rt}=\xi_{\rt}$,
this proves $P=\theta_{\xi_{\rt},\xi_{\rt}}$.
For arbitrary $k$, recall from \eqref{eq:twisted-gns} that
$Y_k\pi_{\rt}(a)Y_k^*=\pi_k(\alpha_k(a))$ and
$Y_k\xi_{\rt}=\xi_k$. Therefore
\begin{align*}
 P_k^{(k)}
 &=\stlim_{n\to\infty}\pi_k(\alpha_k(q_n))\\
 &=Y_k\left(\stlim_{n\to\infty}\pi_{\rt}(q_n)\right)Y_k^*\\
 &=Y_k\theta_{\xi_{\rt},\xi_{\rt}}Y_k^*
 =\theta_{\xi_k,\xi_k}.
\end{align*}
Finally, if $\sigma$ is irreducible and $\eta\in\ran P_i^\sigma$ is a
unit vector, then
\[
 \ip{\sigma(a)\eta}{\eta}
 =\ip{P_i^\sigma\sigma(a)P_i^\sigma\eta}{\eta}
 =\phi_i(a)\qquad(a\in C).
\]
Every nonzero vector of an irreducible representation is cyclic.
Thus GNS uniqueness identifies $\sigma$ with
$\pi_i$. This proves the vanishing assertion and the remaining case of
\eqref{eq:gns-common-ranges}.
\end{proof}

\section{The generated algebra}\label{sec:construction}

\subsection{Shell sums and defect spaces}
Matching the shells determines an operator only on the orthogonal
complement of the common range. The next lemma first assembles the shell
maps and then specifies the remaining map between the two common ranges.
Its converse will construct $L_i$; its forward direction will recover
$\rho(L_i)$ from the finite relations in any representation $\rho$.

\begin{lemma}[Shell and defect decomposition]\label{lem:shell-calculus}
Let $(P_n)$ and $(Q_n)$ be decreasing sequences of orthogonal projections
on a Hilbert space $K$, with $P_0=Q_0=1$. Suppose
\[
 W_n^*W_n=P_n-P_{n+1},\qquad
 W_nW_n^*=Q_n-Q_{n+1}\qquad(n\ge0).
\]
Write $P=\stlim_{n\to\infty}P_n$ and $Q=\stlim_{n\to\infty}Q_n$.
The sums $S_N=\sum_{n=0}^{N-1}W_n$ and $S_N^*$ converge strongly to
$S$ and $S^*$, respectively, with
\begin{equation}\label{eq:shell-products}
 S^*S=1-P,\qquad SS^*=1-Q.
\end{equation}
If $V$ is a unitary satisfying $V(P_n-P_{n+1})=W_n$, then
\begin{equation}\label{eq:reconstruction}
 V=S+D,\qquad D=VP=QDP,\qquad D^*D=P,\quad DD^*=Q.
\end{equation}
Conversely, for any $D$ with $D^*D=P$ and $DD^*=Q$, the operator
$V=S+D$ is unitary and satisfies
\begin{equation}\label{eq:shell-unitary-action}
 V(P_n-P_{n+1})=W_n,\qquad VP_nV^*=Q_n\qquad(n\ge0).
\end{equation}
It is the unique such unitary with $V|_{PK}=D|_{PK}$.
In particular, when $P=\theta_{\xi,\xi}$ and $Q=\theta_{\eta,\eta}$
for unit vectors $\xi,\eta$, the unique shell-matching unitary sending
$\xi$ to $\eta$ is $S+\theta_{\eta,\xi}$.
\end{lemma}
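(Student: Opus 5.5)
The proof is pure Hilbert-space operator calculus: partial isometries with orthogonal supports, then strong limits. Write $E_n=P_n-P_{n+1}$ and $F_n=Q_n-Q_{n+1}$. These are two families of pairwise orthogonal projections. Their strong sums are $\sum_n E_n=1-P$ and $\sum_n F_n=1-Q$, since $\sum_{n<N}E_n=1-P_N$ converges strongly to $1-P$.

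\emph{Strong convergence of the shell sums.} Because $W_n=F_nW_nE_n$, for $x\in K$ the vectors $W_nx=W_nE_nx$ are pairwise orthogonal. Hence
\[
\Big\|\sum_{n=M}^{N-1}W_nx\Big\|^2=\sum_{n=M}^{N-1}\|E_nx\|^2\to0 .
\]
So $S_N$ converges strongly to an operator $S$ of norm at most $1$. The same argument with the roles of $E_n$ and $F_n$ exchanged shows that $S_N^*$ converges strongly. Its limit is $S^*$, by weak continuity of the adjoint. The finite identities $S_N^*S_N=1-P_N$ and $S_NS_N^*=1-Q_N$ hold because cross terms vanish. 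Passing to the limit needs only that $S_N\to S$ strongly with uniformly bounded norms: then $S_N^*S_Nx\to S^*Sx$, because $S_N^*(S_Nx-Sx)\to0$ and $S_N^*Sx\to S^*Sx$. This gives \eqref{eq:shell-products}.

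\emph{Forward direction.} Suppose $V$ is unitary with $VE_n=W_n$. Summing gives $V(1-P_N)=S_N$, and strong limits give $V(1-P)=S$. Thus $V=S+D$ with $D=VP$, and $D^*D=PV^*VP=P$. The range of $S$ is contained in $\ran(1-Q)$, and the range of $S^*$ in $\ran(1-P)$. Since $VV^*=1$, we have $1=(S+D)(S+D)^*=SS^*+DD^*+SD^*+DS^*$. Here $SD^*=V(1-P)PV^*=0$ and likewise $DS^*=0$, so $DD^*=1-SS^*=Q$. Then $D=DD^*D=QDP$.

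\emph{Converse.} Suppose $D^*D=P$ and $DD^*=Q$. Then $D=QDP$, and $S=(1-Q)S(1-P)$ because $S^*S=1-P$ and $SS^*=1-Q$. Hence the cross terms $S^*D$, $D^*S$, $SD^*$, $DS^*$ all vanish, and $V^*V=(1-P)+P=1$ and $VV^*=1$. Moreover $VE_n=SE_n=W_n$, since $DE_n=DPE_n=0$ and $S_NE_n=W_n$ for $N>n$. For the second identity, $VP_n=V(1-(1-P_n))=V-S_n$. Hence
\[
VP_nV^*=1-S_nV^*=1-S_nS_n^*=1-(1-Q_n)=Q_n,
\]
using $S_nV^*=S_nS^*$ (because $S_nD^*=0$) and $S_nS^*=S_nS_n^*$ (orthogonality of the shells).

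\emph{Uniqueness and the rank-one case.} A unitary of this kind is determined on $\ran(1-P)=\overline{\operatorname{span}}\bigcup_n E_nK$ by the shell conditions, and on $PK$ by the prescribed $D$, so it is unique. When $P=\theta_{\xi,\xi}$ and $Q=\theta_{\eta,\eta}$, the operator $D=\theta_{\eta,\xi}$ satisfies $D^*D=P$ and $DD^*=Q$, and $D\xi=\eta$. Any shell-matching unitary $V$ with $V\xi=\eta$ has $VP=\theta_{\eta,\xi}$, so uniqueness gives $V=S+\theta_{\eta,\xi}$.

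The only delicate step is justifying the strong limits of the products $S_N^*S_N$, and that is handled by the uniform bound $\|S_N\|\le1$.
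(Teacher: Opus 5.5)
Your proposal is correct and follows essentially the paper's route: the orthogonality of the shells gives both the Cauchy estimate and the finite identities $S_N^*S_N=1-P_N$ and $S_NS_N^*=1-Q_N$; strong limits of $V(1-P_N)=S_N$ give the forward decomposition; and uniqueness comes from $V$ being determined separately on $(1-P)K$ and on $PK$. The only minor difference is that you obtain $DD^*=Q$ by expanding $VV^*=1$ with $SD^*=DS^*=0$, whereas the paper takes the strong limit in $V(1-P_N)V^*=1-Q_N$.
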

\begin{proof}
Set $F_n=P_n-P_{n+1}$ and $G_n=Q_n-Q_{n+1}$. The support identities
$W_n=G_nW_nF_n$ and orthogonality give, for $m\ne n$,
\[
 W_m^*W_n=W_m^*G_mG_nW_n=0,\qquad
 W_mW_n^*=W_mF_mF_nW_n^*=0.
\]
Consequently
\begin{equation}\label{eq:finite-shell-products}
 S_N^*S_N=1-P_N,\qquad S_NS_N^*=1-Q_N.
\end{equation}
For $M\ge N$, the decreasing projections satisfy
\[
 0\le P_N-P_M\le P_N-P,\qquad
 0\le Q_N-Q_M\le Q_N-Q.
\]
Since $P_N-P$ and $Q_N-Q$ are themselves projections, for $x\in K$ we have
\[
 \begin{aligned}
 \norm{(S_M-S_N)x}^2
 &=\ip{(P_N-P_M)x}{x}\\
 &\le\ip{(P_N-P)x}{x}=\norm{(P_N-P)x}^2,\\
 \norm{(S_M^*-S_N^*)x}^2
 &=\ip{(Q_N-Q_M)x}{x}\\
 &\le\ip{(Q_N-Q)x}{x}=\norm{(Q_N-Q)x}^2.
 \end{aligned}
\]
Both bounds tend to zero as $N\to\infty$, uniformly for $M\ge N$.
Thus the sums and their adjoints converge strongly. Since $\norm{S_N}\le1$ and
$\ip{S_Nx}{y}=\ip{x}{S_N^*y}$, their limits are adjoints. Taking strong
limits in \eqref{eq:finite-shell-products} proves \eqref{eq:shell-products}.
In particular,
\[
 S=(1-Q)S(1-P).
\]

If $V$ satisfies the finite shell relations, then
\begin{equation}\label{eq:finite-reconstruction}
 V(1-P_N)=S_N,\qquad
 V(1-P_N)V^*=1-Q_N,\qquad V^*(1-Q_N)=S_N^*.
\end{equation}
Taking strong limits gives $V(1-P)=S$ and $VPV^*=Q$.
For $D=VP$ we therefore have
\[
 D=QDP,\qquad D^*D=PV^*VP=P,\qquad DD^*=VPV^*=Q.
\]
This proves \eqref{eq:reconstruction}.

Conversely, $D^*D=P$ and $DD^*=Q$ imply $D=QDP$, so
\[
 S^*D=D^*S=0,\qquad SD^*=DS^*=0.
\]
Hence, for $V=S+D$,
\[
 V^*V=(1-P)+P=1,\qquad VV^*=(1-Q)+Q=1.
\]
Also $SF_n=W_n$ and $DF_n=0$, whence $VF_n=W_n$.
Summing these identities for $n<N$ proves \eqref{eq:finite-reconstruction}
and thus $VP_NV^*=Q_N$. If $\widetilde V$ has the same shell and defect
actions, then
\[
 (\widetilde V-V)(1-P_N)=0,\qquad (\widetilde V-V)P=0.
\]
Letting $N\to\infty$ strongly gives $\widetilde V=V$.
For rank-one $P,Q$, take $D=\theta_{\eta,\xi}$.
\end{proof}

\subsection{The atomic model}
Form the reduced atomic representation
\[
 H=\bigoplus_{i\in I}H_i,\qquad
 \Pi=\bigoplus_{i\in I}\pi_i,\qquad
 \zeta_i=\iota_i\xi_i,
\]
where $\iota_i:H_i\to H$ is the coordinate inclusion. Each $\pi_i$ is
faithful by simplicity of $C$, so $\Pi$ is faithful. The common-range
calculation \eqref{eq:gns-common-ranges} gives
\begin{equation}\label{eq:atomic-limits}
 P_i^\Pi:=\stlim_{n\to\infty}\Pi(p_{i,n})
 =\theta_{\zeta_i,\zeta_i},\qquad
 P_{\rt}^\Pi=\stlim_{n\to\infty}\Pi(q_n)
 =\theta_{\zeta_{\rt},\zeta_{\rt}}.
\end{equation}
The limits hold first on finite-support vectors and then on $H$, since the
projections are uniformly bounded.

Recall the finite sums $s_{i,N}=\sum_{n=0}^{N-1}w_{i,n}$ in $C$.
Define the shell part, the model defect part, and their sum by
\begin{equation}\label{eq:generators}
 S_i=\stlim_{N\to\infty}\Pi(s_{i,N}),\qquad
 R_i^{\mathrm{def}}=\theta_{\zeta_{\rt},\zeta_i},\qquad
 L_i=S_i+R_i^{\mathrm{def}}.
\end{equation}
Their initial and final projections are complementary:
\[
 \begin{aligned}
 S_i^*S_i&=1-P_i^\Pi,&
 S_iS_i^*&=1-P_{\rt}^\Pi,\\
 (R_i^{\mathrm{def}})^*R_i^{\mathrm{def}}&=P_i^\Pi,&
 R_i^{\mathrm{def}}(R_i^{\mathrm{def}})^*&=P_{\rt}^\Pi.
 \end{aligned}
\]
The mixed products vanish. By \cref{lem:shell-calculus},
\begin{equation}\label{eq:generator-relations}
 \begin{gathered}
 L_i\in\U(\B(H)),\qquad L_i\zeta_i=\zeta_{\rt},\qquad L_{\rt}=1,\\
 L_i\Pi(e_{i,n})=\Pi(w_{i,n}),\qquad
 L_i\Pi(p_{i,n})L_i^*=\Pi(q_n).
 \end{gathered}
\end{equation}
At the root, $S_{\rt}=1-P_{\rt}^\Pi$ and
$R_{\rt}^{\mathrm{def}}=P_{\rt}^\Pi$, which explains $L_{\rt}=1$.

Set
\begin{equation}\label{eq:algebra}
 A=C^*(\Pi(C),\{L_i:i\in I\})\subseteq\B(H),\qquad
 j(a)=\Pi(a)\quad(a\in C).
\end{equation}
Only $\Pi(C)$ and the $L_i$ are adjoined. The strong limits $P_i^\Pi$ and
$S_i$, and the rank-one operators $R_i^{\mathrm{def}}$, are used in
$\B(H)$; their separate membership in $A$ is not required.

Summing the shell relations gives the following identities in $A$:
\begin{equation}\label{eq:finite-trace-relations}
 \begin{aligned}
 L_i(1-j(p_{i,N}))&=j(s_{i,N}),\\
 L_i^*(1-j(q_N))&=j(s_{i,N})^*.
 \end{aligned}
\end{equation}
In addition, $L_i j(p_{i,N})L_i^*=j(q_N)$ by
\eqref{eq:generator-relations}. We use these identities in
Sections~\ref{sec:capture} and~\ref{sec:tracial}. Being finite identities
in $A$, they are preserved by every $*$-homomorphism from $A$.

\begin{proposition}\label{prop:atomic}
The algebra $A$ is unital and infinite-dimensional, $j$ is injective,
and the inclusion $A\subseteq\B(H)$ is irreducible.
\end{proposition}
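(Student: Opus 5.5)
Unitality, injectivity of $j$, and infinite-dimensionality are immediate from the construction. $A$ contains $\Pi(1)=1_H$, since $\Pi$ is a unital direct sum of unital GNS representations. The map $j=\Pi$ is injective because each $\pi_i$ is faithful by simplicity of $C$, as already recorded for the atomic model. Then $j(C)\subseteq A$ is an isomorphic copy of the infinite-dimensional CAR algebra, so $A$ is infinite-dimensional.

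The substantive claim is irreducibility. I would show that the commutant $A'\subseteq\B(H)$ consists of scalars, splitting into two steps.

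\emph{Step one.} An operator $T\in A'$ commutes with $\Pi(C)$, and the summands $\pi_i$ are pairwise inequivalent irreducible representations. By Schur's lemma, $T$ is therefore diagonal with respect to $H=\bigoplus_i H_i$ and acts as a scalar on each summand: $T=\bigoplus_i\lambda_i 1_{H_i}$. To make this rigorous without appealing to general type~I decomposition, I would argue as follows. For $i\ne k$ the compression $\iota_k^*T\iota_i$ intertwines $\pi_i$ and $\pi_k$, and so vanishes by Schur. The diagonal compression $\iota_i^*T\iota_i$ lies in $\pi_i(C)'=\C1$. An alternative route uses the common-range projections: $T$ commutes with $\Pi(p_{i,n})$ for all $n$, hence with the strong limit $P_i^\Pi=\theta_{\zeta_i,\zeta_i}$ from \eqref{eq:atomic-limits}. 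Thus $T\zeta_i=\lambda_i\zeta_i$, and cyclicity of $\zeta_i$ for $\iota_i\pi_i(C)$ in $H_i$ spreads this over $H_i$. This second route also supplies the orthogonality, because $T\zeta_i\in\ran P_i^\Pi=\C\zeta_i$.

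\emph{Step two.} I use the added unitaries to link the scalars. Since $T$ commutes with $L_i$ and $L_i\zeta_i=\zeta_{\rt}$ by \eqref{eq:generator-relations},
\[
 \lambda_{\rt}\zeta_{\rt}=T\zeta_{\rt}=TL_i\zeta_i=L_iT\zeta_i=\lambda_i\zeta_{\rt},
\]
so $\lambda_i=\lambda_{\rt}$ for every $i$. Hence $T=\lambda_{\rt}1$, and $A'=\C1$, which is irreducibility of the inclusion by Schur's lemma.

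The only point needing care is the passage from commuting with the algebra to commuting with the strong limits $P_i^\Pi$. This is routine, since the commutant is strongly closed and the products are uniformly bounded. Beyond that I expect no serious obstacle; the real difficulty with representations of $A$ lies in classifying abstract irreducible representations, which is deferred to \cref{thm:capture}.
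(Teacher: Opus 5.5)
Your proof is correct and follows the paper's argument. Schur's lemma applied to the blocks $\iota_k^*T\iota_i$ identifies $\Pi(C)'$ with the diagonal scalars $\bigoplus_i\lambda_i1_{H_i}$, and $L_i\zeta_i=\zeta_{\rt}$ then forces $\lambda_i=\lambda_{\rt}$ for every $i$; your alternative first step through the common-range projections $P_i^\Pi=\theta_{\zeta_i,\zeta_i}$ is also valid, though the paper does not need it.
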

\begin{proof}
The first assertions follow from the faithful unital copy of $C$.
If $T\in\Pi(C)'$, its matrix blocks satisfy
\[
 (\iota_j^*T\iota_i)\pi_i(a)
 =\pi_j(a)(\iota_j^*T\iota_i)\qquad(a\in C).
\]
Schur's lemma therefore gives
\[
 \Pi(C)'=
 \left\{\bigoplus_{i\in I}\lambda_i1_{H_i}:
                  \sup_i|\lambda_i|<\infty\right\}.
\]
If $T$ also commutes with every $L_i$, then
\[
 \lambda_{\rt}\zeta_{\rt}
 =TL_i\zeta_i=L_iT\zeta_i=\lambda_i\zeta_{\rt}.
\]
Thus all $\lambda_i$ are equal, so $A'=\C1$ and the inclusion is
irreducible.
\end{proof}

\section{All irreducible representations}\label{sec:capture}

We now recover the atomic model inside an arbitrary irreducible
representation. The two parts of each added unitary must be controlled
separately: the shells determine its action off the common range, while
one vector in a nonzero common range determines the required defect action
on the cyclic subspaces constructed below.

\begin{theorem}\label{thm:capture}
Every nonzero irreducible representation $\rho:A\to\B(K)$ is unitarily
equivalent to the inclusion $A\subseteq\B(H)$.
\end{theorem}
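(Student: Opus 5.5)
Let $\rho:A\to\B(K)$ be irreducible and put $\sigma=\rho\circ j$, a unital representation of $C$. For each $i$ let $P_i^\sigma=\stlim_n\sigma(p_{i,n})$ and $Q^\sigma=P_{\rt}^\sigma=\stlim_n\sigma(q_n)$.

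\emph{Step 1: reconstruct $\rho(L_i)$.} Apply $\rho$ to the finite identities \eqref{eq:finite-trace-relations} and to $L_ij(p_{i,N})L_i^*=j(q_N)$. Then \cref{lem:shell-calculus} applies with $P_n=\sigma(p_{i,n})$, $Q_n=\sigma(q_n)$, $W_n=\sigma(w_{i,n})$ and $V=\rho(L_i)$. It gives
\[
\rho(L_i)=S_i^\sigma+D_i,\qquad D_i=\rho(L_i)P_i^\sigma,
\]
where $D_i$ is a partial isometry from $\ran P_i^\sigma$ onto $\ran Q^\sigma$ and $S_i^\sigma=\stlim\sigma(s_{i,N})$.

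\emph{Step 2: some common range is nonzero.} Suppose every $P_i^\sigma=0$. Then every $D_i=0$, so $\rho(L_i)=S_i^\sigma$ lies in the strong closure of $\sigma(C)$. Hence $\sigma(C)'=\rho(A)'=\C1$, and $\sigma$ is irreducible. It is then equivalent to some $\pi_k$. But $P_k^{(k)}\ne0$ by \eqref{eq:gns-common-ranges}, and common-range projections transport under unitary equivalence, so $P_k^\sigma\ne0$, a contradiction. In fact, since $D_i$ maps $\ran P_i^\sigma$ unitarily onto $\ran Q^\sigma$, all the common ranges have the same dimension. So $Q^\sigma\ne0$, and we fix a unit vector $\eta\in\ran Q^\sigma$.

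\emph{Step 3: build the isometry.} Put $\eta_i=\rho(L_i)^*\eta=D_i^*\eta\in\ran P_i^\sigma$. By \eqref{eq:fixed-compression}, the vector state of $\eta_i$ on $C$ is $\phi_i$. GNS uniqueness therefore gives isometries $U_i:H_i\to K$ with $U_i\xi_i=\eta_i$ and $U_i\pi_i(a)=\sigma(a)U_i$. The cyclic subspaces $K_i=\overline{\sigma(C)\eta_i}$ are mutually orthogonal: for $i\ne k$ the representations $\pi_i,\pi_k$ are inequivalent, so there are no nonzero intertwiners between these subrepresentations, and hence the central supports are orthogonal. (Equivalently, the projection onto $K_i$ lies in $\sigma(C)'$, and disjointness of the subrepresentations forces orthogonality.) Define $U=\sum_i U_i\iota_i^*:H\to K$. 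This is an isometry intertwining $\Pi$ and $\sigma$, with $U\zeta_i=\eta_i$ and $U\zeta_{\rt}=\eta$; the last uses $L_{\rt}=1$, hence $D_{\rt}=Q^\sigma$ and $\eta_{\rt}=\eta$.

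\emph{Step 4: intertwine the $L_i$ (main obstacle).} We need $\rho(L_i)U=UL_i$ and $\rho(L_i)^*U=UL_i^*$. Because $U$ intertwines $\Pi$ and $\sigma$, the strong limits satisfy
\[
S_i^\sigma U=US_i,\qquad P_i^\sigma U=UP_i^\Pi=U\theta_{\zeta_i,\zeta_i}.
\]
Hence $D_iU=\rho(L_i)P_i^\sigma U=\theta_{\rho(L_i)\eta_i,\zeta_i}=\theta_{\eta,\zeta_i}=UR_i^{\mathrm{def}}$, using $\rho(L_i)\eta_i=\rho(L_i)\rho(L_i)^*\eta=\eta$. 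Adding gives $\rho(L_i)U=UL_i$.

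For the adjoint, $\rho(L_i)^*=(S_i^\sigma)^*+D_i^*$ with $D_i^*Q^\sigma=D_i^*$. We have $Q^\sigma U=U\theta_{\zeta_{\rt},\zeta_{\rt}}$, so $D_i^*U=\theta_{\eta_i,\zeta_{\rt}}=U(R_i^{\mathrm{def}})^*$. The shell adjoint intertwines by the same strong-limit argument.

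The delicate point is exactly this step: the defect terms are not in $\sigma(C)''$. They are controlled only because $P_i^\sigma U$ and $Q^\sigma U$ land in the rank-one model ranges, and this follows from intertwining $\Pi$ and $\sigma$ together with \eqref{eq:atomic-limits}.

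\emph{Step 5: conclude.} The range $UH$ is invariant under $\rho(A)$ and $\rho(A)^*$, and it is nonzero. Irreducibility of $\rho$ forces $UH=K$, so $U$ is a unitary implementing the equivalence with the inclusion $A\subseteq\B(H)$.
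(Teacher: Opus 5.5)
Your proof is correct and follows the paper's argument step for step: the shell--defect decomposition of $\rho(L_i)$, the contradiction showing that some common range is nonzero (via irreducibility of $\sigma$ and \eqref{eq:gns-common-ranges}), the orthogonal GNS copies assembled into an isometry, and separate intertwining of the shell and defect parts, followed by irreducibility to force surjectivity. Your direct computation $P_i^\sigma U=U\theta_{\zeta_i,\zeta_i}$ from the strong-limit intertwining is a compact form of the paper's summand-by-summand calculation in \eqref{eq:range-on-summands}--\eqref{eq:defect-on-cyclic-sum}.
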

\begin{proof}
The representation $\rho$ is unital. Write
\[
 \sigma=\rho\circ j:C\longrightarrow\B(K),\qquad V_i=\rho(L_i).
\]
We will construct an isometry $W:H\to K$, prove the generator relations
\[
 W\Pi(a)=\sigma(a)W,\qquad
 WL_i=V_iW,\qquad WL_i^*=V_i^*W,
\]
and then use irreducibility to show that $W$ is surjective.

\emph{Step 1: a nonzero common range.}
On $K$, put
\[
 P_i^\sigma=\stlim_{n\to\infty}\sigma(p_{i,n}),\qquad
 P_{\rt}^\sigma=\stlim_{n\to\infty}\sigma(q_n).
\]
Applying $\rho$ to the finite relations in
\eqref{eq:generator-relations} gives
$V_i\sigma(e_{i,n})=\sigma(w_{i,n})$. By
\cref{lem:shell-calculus},
\begin{align}
 S_i^\sigma&=\stlim_{N\to\infty}\sigma(s_{i,N}),
 & (S_i^\sigma)^*&=\stlim_{N\to\infty}\sigma(s_{i,N})^*,
 \label{eq:represented-sum}\\
 V_i&=S_i^\sigma+D_i,
 & D_i&=V_iP_i^\sigma=P_{\rt}^\sigma D_iP_i^\sigma,
 \label{eq:represented-decomposition}\\
 D_i^*D_i&=P_i^\sigma,
 & D_iD_i^*&=P_{\rt}^\sigma.
 \label{eq:represented-products}
\end{align}
Here $s_{i,N}=\sum_{n=0}^{N-1}w_{i,n}\in C$. These limits are formed
from the represented finite sums, not by applying $\rho$ to a strong
limit on $H$.

Suppose $P_i^\sigma=0$ for every $i$. Then $V_i=S_i^\sigma$. If
$F\in\sigma(C)'$ is an orthogonal projection, then
\[
 F\sigma(s_{i,N})=\sigma(s_{i,N})F
 \quad\Longrightarrow\quad FV_i=V_iF
\]
by strong convergence. Taking adjoints also gives $FV_i^*=V_i^*F$.
Thus $F\in\rho(A)'=\C1$, so $F$ is $0$ or $1$. Hence $\sigma$ has no
nontrivial reducing subspace and is irreducible. The family chosen in
Section~\ref{sec:flags} supplies
$k\in I$ and a unitary $Z:H_k\to K$ with
$\sigma(a)=Z\pi_k(a)Z^*$. But \eqref{eq:gns-common-ranges} gives
\[
 0=P_k^\sigma
 =Z\left(\stlim_{n\to\infty}\pi_k(p_{k,n})\right)Z^*
 =\theta_{Z\xi_k,Z\xi_k}\ne0,
\]
a contradiction. Some $P_i^\sigma$ is nonzero; then
\eqref{eq:represented-products} implies $P_{\rt}^\sigma\ne0$.
Moreover, $D_i=V_iP_i^\sigma$ and $D_iD_i^*=P_{\rt}^\sigma$ give
\begin{equation}\label{eq:all-common-ranges}
 V_iP_i^\sigma V_i^*=P_{\rt}^\sigma,\qquad
 P_i^\sigma=V_i^*P_{\rt}^\sigma V_i\ne0\qquad(i\in I).
\end{equation}
Thus every flag has a nonzero common range in this representation.

\emph{Step 2: orthogonal copies of the chosen GNS spaces.}
Choose a unit vector $\eta_{\rt}\in P_{\rt}^\sigma K$ and set
$\eta_i=V_i^*\eta_{\rt}$. By \eqref{eq:all-common-ranges},
\begin{equation}\label{eq:eta}
 \eta_i=V_i^*\eta_{\rt},\qquad
 P_i^\sigma\eta_i=\eta_i,\qquad
 \norm{\eta_i}=1,\qquad V_i\eta_i=\eta_{\rt}.
\end{equation}
At $i=\rt$ this agrees with the chosen vector, because $V_{\rt}=1$.
The compression identity \eqref{eq:fixed-compression} gives
\begin{equation}\label{eq:eta-state}
 \ip{\sigma(a)\eta_i}{\eta_i}
 =\ip{P_i^\sigma\sigma(a)P_i^\sigma\eta_i}{\eta_i}
 =\phi_i(a)\qquad(a\in C).
\end{equation}
Put
\[
 M_i=\overline{\sigma(C)\eta_i},\qquad
 \sigma_i(a)=\sigma(a)|_{M_i}.
\]
Let $E_i\in\B(K)$ be the orthogonal projection onto $M_i$. Each $M_i$
reduces $\sigma(C)$, and $\eta_i$ is cyclic in $M_i$ by definition.
GNS uniqueness gives a pointed unitary
$T_i:H_i\to M_i$ satisfying
\begin{equation}\label{eq:pointed-gns}
 T_i\pi_i(a)\xi_i=\sigma(a)\eta_i,\qquad
 T_i\xi_i=\eta_i,\qquad
 T_i\pi_i(a)=\sigma_i(a)T_i.
\end{equation}
In particular, the $\sigma_i$ are irreducible and pairwise inequivalent.
For $i\ne j$, the projection relation
$E_j\sigma(a)=\sigma(a)E_j$ shows that
\[
 Z_{ji}:=E_j|_{M_i}:M_i\to M_j,\qquad
 Z_{ji}\sigma_i(a)=\sigma_j(a)Z_{ji}\quad(a\in C).
\]
Schur's lemma gives $Z_{ji}=0$, whence
\begin{equation}\label{eq:cyclic-orthogonality}
 M_i\perp M_j\qquad(i\ne j).
\end{equation}
The $T_i$ consequently define an isometry $W:H\to K$ by
\[
 W(\iota_i x)=T_ix\quad(x\in H_i),\qquad
 \ran W=M:=\bigoplus_{i\in I}M_i.
\]
Its range is closed and reduces $\sigma(C)$, and
\begin{equation}\label{eq:source-intertwining}
 W\zeta_i=\eta_i,\qquad W\Pi(a)=\sigma(a)W\quad(a\in C).
\end{equation}
Surjectivity of $W$ has not yet been established.

\emph{Step 3: the defect action on $M$.}
We first restrict the common-range projections to each summand. For
$x\in H_k$, the relation $\sigma(a)T_k=T_k\pi_k(a)$ gives
\[
 \begin{aligned}
 P_i^\sigma T_kx
 &=\lim_{n\to\infty}\sigma(p_{i,n})T_kx\\
 &=T_k\left(\lim_{n\to\infty}\pi_k(p_{i,n})x\right)
 =T_kP_i^{(k)}x,
 \end{aligned}
\]
where the limits on vectors are in Hilbert-space norm. Also $E_k$
commutes with $\sigma(p_{i,n})$ and its strong limit, so $M_k$ reduces
$P_i^\sigma$. By \eqref{eq:gns-common-ranges}, as operators on $M_k$,
\begin{equation}\label{eq:range-on-summands}
 P_i^\sigma|_{M_k}
 =T_kP_i^{(k)}T_k^*
 =\delta_{ik}\theta_{\eta_k,\eta_k}.
\end{equation}
Here $\delta_{ik}$ is the Kronecker delta. Passing from finite sums of
summands to their closure gives
\begin{equation}\label{eq:range-compressions}
 P_i^\sigma|_M=\theta_{\eta_i,\eta_i},\qquad
 P_i^\sigma M=\C\eta_i,\qquad
 P_{\rt}^\sigma M=\C\eta_{\rt}.
\end{equation}
Now recall $D_i=V_iP_i^\sigma$ and
$D_i^*=P_i^\sigma D_i^*P_{\rt}^\sigma$. By \eqref{eq:eta},
\[
 D_i\eta_i=V_iP_i^\sigma\eta_i=\eta_{\rt},\qquad
 D_i^*\eta_{\rt}=P_i^\sigma V_i^*\eta_{\rt}=\eta_i.
\]
Consequently, for $x\in M$,
\begin{equation}\label{eq:defect-on-cyclic-sum}
 D_ix=\ip{x}{\eta_i}\eta_{\rt},\qquad
 D_i^*x=\ip{x}{\eta_{\rt}}\eta_i.
\end{equation}
These are rank-one formulas on $M$ only; no dimension assertion about
$P_i^\sigma K$ has been used.

\emph{Step 4: intertwining the generators and proving surjectivity.}
Recall the model decomposition
\[
 S_i=\stlim_{N\to\infty}\Pi(s_{i,N}),\qquad
 R_i^{\mathrm{def}}=\theta_{\zeta_{\rt},\zeta_i},\qquad
 L_i=S_i+R_i^{\mathrm{def}}.
\]
At each finite stage, \eqref{eq:source-intertwining} intertwines
$\Pi(s_{i,N})$ with $\sigma(s_{i,N})$ and also intertwines their
adjoints. Taking the two strong limits yields
\begin{equation}\label{eq:shell-intertwining}
 S_i^\sigma W=WS_i,\qquad (S_i^\sigma)^*W=WS_i^*.
\end{equation}
For the defect part, use \eqref{eq:defect-on-cyclic-sum} and
$W\zeta_i=\eta_i$. For $x\in H$, preservation of the inner product gives
\begin{align}
 D_iWx
 &=V_iP_i^\sigma Wx
 =\ip{Wx}{\eta_i}\eta_{\rt}\notag\\
 &=\ip{x}{\zeta_i}W\zeta_{\rt}
 =WR_i^{\mathrm{def}}x,\label{eq:defect-intertwining}\\
 D_i^*Wx
 &=\ip{Wx}{\eta_{\rt}}\eta_i
 =\ip{x}{\zeta_{\rt}}W\zeta_i
 =W(R_i^{\mathrm{def}})^*x.\label{eq:adjoint-defect-intertwining}
\end{align}
Thus both required generator relations hold:
\begin{equation}\label{eq:full-generator-intertwining}
 \begin{aligned}
 V_iW&=(S_i^\sigma+D_i)W=W(S_i+R_i^{\mathrm{def}})=WL_i,\\
 V_i^*W&=((S_i^\sigma)^*+D_i^*)W
             =W(S_i^*+(R_i^{\mathrm{def}})^*)=WL_i^*.
 \end{aligned}
\end{equation}
Together with \eqref{eq:source-intertwining}, these identities show that
$M=\ran W$ is invariant under $\sigma(C)$, every $V_i$, and every $V_i^*$.
Hence $M$ reduces $\rho(A)$. Since $0\ne\eta_{\rt}\in M$,
irreducibility gives $M=K$, and only now is $W$ unitary.
Finally, norm generation extends the intertwining identities to
$Wb=\rho(b)W$ for every $b\in A$.
\end{proof}

\begin{corollary}\label{cor:simple}
The algebra $A$ is simple and is not isomorphic to an algebra of compact
operators.
\end{corollary}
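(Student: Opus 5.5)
For simplicity, I would combine \cref{thm:capture} with the standard fact that every proper closed two-sided ideal of a $C^*$-algebra lies in the kernel of some irreducible representation. Let $J\subseteq A$ be a proper closed ideal. If $J\neq 0$, the quotient $A/J$ is a nonzero unital $C^*$-algebra and so has an irreducible representation $\bar\rho$; one can take the GNS representation of a pure state of $A/J$. Then $\rho=\bar\rho\circ q$, with $q:A\to A/J$ the quotient map, is a nonzero irreducible representation of $A$ satisfying $J\subseteq\ker\rho$.

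By \cref{thm:capture} there is a unitary $U:H\to K$ with $\rho(b)=U bU^*$ for all $b\in A$. Since the inclusion $A\subseteq\B(H)$ is injective, $\rho$ is faithful, so $\ker\rho=0$. This contradicts $0\neq J\subseteq\ker\rho$. Hence $A$ has no nonzero proper closed ideals. It is nonzero and unital by \cref{prop:atomic}, so $A$ is simple.

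For the second assertion, suppose $A\cong\K(K')$ for some Hilbert space $K'$. Then $\K(K')$ is unital. Its unit would be a compact operator that acts as the identity on $K'$, and that can only happen when $\dim K'<\infty$. So $A$ would be finite-dimensional, which contradicts \cref{prop:atomic}. I see no real obstacle here. The only point needing care is that \cref{thm:capture} really gives faithfulness of every irreducible representation, and unitary equivalence with the faithful inclusion supplies exactly that.
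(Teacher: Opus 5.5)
Your proposal is correct and follows the paper's proof essentially step for step. Like the paper, you pull back an irreducible representation of the nonzero quotient $A/J$ and use \cref{thm:capture} to force faithfulness, then observe that a unital algebra of compact operators must be finite-dimensional.
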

\begin{proof}
If $J\subset A$ were a nonzero proper closed ideal, an irreducible
representation of the nonzero quotient $A/J$ would pull back to an
irreducible representation $\rho$ of $A$ whose kernel contains $J$.
Such a representation exists by the pure-state GNS construction
\cite[\S3.6]{Farah2019}. By \cref{thm:capture}, $\rho$ is unitarily
equivalent to the concrete inclusion of $A$ and is therefore faithful.
Thus $\ker\rho=0$, contradicting $0\ne J\subseteq\ker\rho$.
Hence $A$ is simple. A compact-operator algebra
$\K(K)$ is unital only when $K$ is finite-dimensional, whereas $A$ is
unital and infinite-dimensional.
\end{proof}

\section{The tracial representation}\label{sec:tracial}

The same flags now have the opposite effect. In the irreducible model their
common ranges are rank one and the defect terms remain; in a state extension
of $\tau$, the bounds $\tau(p_{i,n})=\tau(q_n)=2^{-n}$ force the common
ranges to vanish. The generators will therefore be determined by the
shell sums alone.

\subsection{Vanishing of the common ranges}
By state extension \cite[Lemma~1.7.6(3)]{Farah2019}, choose an arbitrary
state $\Phi$ of $A$ satisfying $\Phi\circ j=\tau$. In its GNS triple,
put
\[
 (K_\Phi,\rho_\Phi,\Omega_\Phi),\qquad
 \sigma_\Phi=\rho_\Phi\circ j,\qquad
 M_\Phi=\overline{\sigma_\Phi(C)\Omega_\Phi}.
\]
At this point only $\tau$ on $C$, not $\Phi$ on $A$, is known to be
tracial. We first show $M_\Phi=K_\Phi$, then compare all extensions on
the fixed GNS space $(H_\tau,\pi_\tau,\Omega_\tau)$ of $\tau$. Uniqueness
will be proved before traciality.

\begin{lemma}[Trace estimate]\label{lem:trace-bound}
Suppose $\sigma:C\to\B(K)$ is unital and a unit vector $\Omega$ has
vector state $\tau$. For every projection $p\in C$ and every $b\in C$,
\begin{equation}\label{eq:trace-bound}
 \norm{\sigma(p)\sigma(b)\Omega}^2\le\norm{b}^2\tau(p).
\end{equation}
Thus $\sigma(p_{i,n})x\to0$ and $\sigma(q_n)x\to0$ in Hilbert-space
norm for every $x\in\overline{\sigma(C)\Omega}$.
\end{lemma}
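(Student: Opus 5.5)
The plan is a direct computation with the vector state, using only that $\tau$ is tracial on $C$ and that $p$ is a projection. First, write
\[
 \norm{\sigma(p)\sigma(b)\Omega}^2
 =\ip{\sigma(b^*pb)\Omega}{\Omega}=\tau(b^*pb),
\]
using $p=p^*p$ and that the vector state of $\Omega$ is $\tau$. Next, apply traciality of $\tau$ to rewrite $\tau(b^*pb)=\tau(b^*p\cdot pb)=\tau(pbb^*p)$. Since $bb^*\le\norm{b}^2 1$, compressing by $p$ gives $pbb^*p\le\norm{b}^2p$, and positivity of $\tau$ yields $\tau(pbb^*p)\le\norm{b}^2\tau(p)$. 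This proves \eqref{eq:trace-bound}.

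For the consequence, apply \eqref{eq:trace-bound} with $p=p_{i,n}$ or $p=q_n$. By \eqref{eq:flag-trace} and \eqref{eq:root-data}, both have trace $2^{-n}$, so
\[
 \norm{\sigma(p_{i,n})\sigma(b)\Omega}\le2^{-n/2}\norm{b}\to0
\]
for each fixed $b\in C$, and similarly for $q_n$. For a general $x\in\overline{\sigma(C)\Omega}$, run an $\varepsilon/2$ argument. Pick $b\in C$ with $\norm{x-\sigma(b)\Omega}<\varepsilon/2$. Since the projections $\sigma(p_{i,n})$ are contractions,
\[
 \norm{\sigma(p_{i,n})x}\le\varepsilon/2+2^{-n/2}\norm{b},
\]
which is below $\varepsilon$ for large $n$. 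The same argument handles $q_n$.

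There is no real obstacle here. The only point needing care is the order of the moves: traciality must be applied to $\tau(b^*pb)$ before bounding by $\norm{b}^2$. The direct bound $b^*pb\le b^*b$ would only give $\tau(b^*b)$, which carries no factor $\tau(p)$ and so gives no decay.
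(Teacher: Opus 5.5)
Your proof is correct and follows the paper's argument. Both use the same computation $\norm{\sigma(p)\sigma(b)\Omega}^2=\tau(b^*pb)$, traciality to reach $\tau(pbb^*p)\le\norm{b}^2\tau(p)$, and the same approximation of $x$ by $\sigma(b)\Omega$ with $\tau(p_{i,n})=\tau(q_n)=2^{-n}$; the only cosmetic difference is that you factor $b^*pb=(b^*p)(pb)$ and cycle once, whereas the paper passes through $\tau(pbb^*)$ and reinserts the second $p$ via $\tau(pbb^*(1-p))=0$.
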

\begin{proof}
The tracial $L^2$ calculation is
\[
 \begin{aligned}
 \norm{\sigma(p)\sigma(b)\Omega}^2
 &=\tau(b^*pb)\\
 &=\tau(pbb^*)\\
 &=\tau(pbb^*p)\le\norm{b}^2\tau(p).
 \end{aligned}
\]
Here the insertion of the final $p$ uses
\[
 \tau(pbb^*(1-p))=\tau((1-p)pbb^*)=0,
\]
and the inequality follows from $0\le pbb^*p\le\norm{b}^2p$.
Using $\tau(p_{i,n})=\tau(q_n)=2^{-n}$ gives
\[
 \norm{\sigma(p_{i,n})\sigma(b)\Omega}
 \le2^{-n/2}\norm{b},\qquad
 \norm{\sigma(q_n)\sigma(b)\Omega}
 \le2^{-n/2}\norm{b}.
\]
For $x\in\overline{\sigma(C)\Omega}$, contractivity of the projections
therefore gives
\[
 \norm{\sigma(p_{i,n})x}
 \le\norm{x-\sigma(b)\Omega}+2^{-n/2}\norm{b}.
\]
First approximate $x$ by $\sigma(b)\Omega$ and then let $n\to\infty$.
The same argument applies to $q_n$.
\end{proof}

\subsection{The full GNS space}
For $s_{i,N}=\sum_{n=0}^{N-1}w_{i,n}\in C$, recall
\eqref{eq:finite-trace-relations}:
\[
 L_i(1-j(p_{i,N}))=j(s_{i,N}),\qquad
 L_i^*(1-j(q_N))=j(s_{i,N})^*.
\]
Their two remainder terms vanish on $M_\Phi$ by the trace estimate.
This will show that adjoining the generators does not enlarge the
CAR-cyclic Hilbert space.

\begin{proposition}\label{prop:trace-cyclic}
For every state extension $\Phi$ of $\tau$ to $A$, one has
$M_\Phi=K_\Phi$. In this representation,
\begin{equation}\label{eq:trace-generators}
 \rho_\Phi(L_i)=\stlim_{N\to\infty}\sigma_\Phi(s_{i,N}),\qquad
 \rho_\Phi(L_i)^*=\stlim_{N\to\infty}\sigma_\Phi(s_{i,N})^*.
\end{equation}
In particular, $K_\Phi$ is separable.
\end{proposition}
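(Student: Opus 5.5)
The plan is to show that $M_\Phi$ is invariant under every $\rho_\Phi(L_i)$ and every $\rho_\Phi(L_i)^*$, and to identify these operators on $M_\Phi$ with the strong limits of the represented shell sums. Write $V_i=\rho_\Phi(L_i)$. Applying $\rho_\Phi$ to \eqref{eq:finite-trace-relations} gives, for every $N$,
\[
 V_i-\sigma_\Phi(s_{i,N})=V_i\sigma_\Phi(p_{i,N}),\qquad
 V_i^*-\sigma_\Phi(s_{i,N})^*=V_i^*\sigma_\Phi(q_N).
\]
The vector state of $\Omega_\Phi$ restricted to $\sigma_\Phi(C)$ is $\Phi\circ j=\tau$, so \cref{lem:trace-bound} applies to $\sigma_\Phi$ and $\Omega_\Phi$. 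It gives $\sigma_\Phi(p_{i,N})x\to0$ and $\sigma_\Phi(q_N)x\to0$ for every $x\in M_\Phi$. Since $\norm{V_i}=1$, we get
\[
 V_ix=\lim_{N}\sigma_\Phi(s_{i,N})x,\qquad V_i^*x=\lim_N\sigma_\Phi(s_{i,N})^*x\qquad(x\in M_\Phi),
\]
with the limits taken in norm. Each $\sigma_\Phi(s_{i,N})x$ and $\sigma_\Phi(s_{i,N})^*x$ lies in $M_\Phi$, because $M_\Phi$ reduces $\sigma_\Phi(C)$. As $M_\Phi$ is closed, the limits lie in $M_\Phi$, so $V_iM_\Phi\subseteq M_\Phi$ and $V_i^*M_\Phi\subseteq M_\Phi$.

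It follows that $M_\Phi$ is invariant under $\sigma_\Phi(C)$ and under all $V_i$ and $V_i^*$. These operators generate $\rho_\Phi(A)$ as a $C^*$-algebra, so $M_\Phi$ is invariant under the whole of $\rho_\Phi(A)$. Since $\Omega_\Phi\in M_\Phi$ is cyclic for $\rho_\Phi(A)$, this forces $M_\Phi=K_\Phi$.

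The formulas \eqref{eq:trace-generators} then hold in the strong operator topology on all of $K_\Phi$, by the vector limits above. Separability holds because $K_\Phi=\overline{\sigma_\Phi(C)\Omega_\Phi}$ and $C$ is norm separable.

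The only delicate point is the order of the steps. The trace estimate is available only on $M_\Phi$ and not on all of $K_\Phi$, so we must not invoke \cref{lem:shell-calculus} on $K_\Phi$, since that would introduce defect operators we cannot yet control. Instead the finite identities are used vectorwise on $M_\Phi$. Along the way we also use that the $\sigma_\Phi(s_{i,N})$ and their adjoints are uniformly bounded by \eqref{eq:finite-car-sums}. This is not strictly needed, since the limits are identified directly.
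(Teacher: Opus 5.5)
Your proposal is correct and follows the paper's proof essentially step for step. You apply $\rho_\Phi$ to \eqref{eq:finite-trace-relations}, use \cref{lem:trace-bound} together with unitarity of $\rho_\Phi(L_i)$ to get norm convergence on each $x\in M_\Phi$, conclude that $M_\Phi$ reduces $\rho_\Phi(A)$ and hence equals $K_\Phi$, and then read off the strong limits and separability.
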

\begin{proof}
Apply $\rho_\Phi$ to \eqref{eq:finite-trace-relations}. For $x\in M_\Phi$,
unitarity and \cref{lem:trace-bound} give
\begin{align}
 \norm{\rho_\Phi(L_i)x-\sigma_\Phi(s_{i,N})x}
 &=\norm{\sigma_\Phi(p_{i,N})x}\longrightarrow0,\label{eq:trace-remainder}\\
 \norm{\rho_\Phi(L_i)^*x-\sigma_\Phi(s_{i,N})^*x}
 &=\norm{\sigma_\Phi(q_N)x}\longrightarrow0.\label{eq:trace-adjoint-remainder}
\end{align}
The finite sums and their adjoints preserve $M_\Phi$, since this space
reduces $\sigma_\Phi(C)$. Thus
\[
 \rho_\Phi(L_i)M_\Phi\subseteq M_\Phi,\qquad
 \rho_\Phi(L_i)^*M_\Phi\subseteq M_\Phi\qquad(i\in I).
\]
It follows that $M_\Phi$ reduces $\rho_\Phi(A)$. As it contains
$\Omega_\Phi$,
\[
 K_\Phi=\overline{\rho_\Phi(A)\Omega_\Phi}\subseteq M_\Phi.
\]
This proves equality. The preceding vector-norm limits consequently hold
for every $x\in K_\Phi$, giving \eqref{eq:trace-generators}.
Finally, $C$ is separable and its orbit of $\Omega_\Phi$ is dense in
$K_\Phi$, so $K_\Phi$ is separable.
\end{proof}

\subsection{Uniqueness and traciality}
\begin{theorem}\label{thm:trace-extension}
The normalized trace $\tau$ of $C$ has a unique state extension
$\widetilde\tau$ to $A$. This extension is tracial and faithful, and it is
the only tracial state of $A$.
\end{theorem}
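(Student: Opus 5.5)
We will prove the four assertions in order: existence, uniqueness, traciality, and faithfulness together with uniqueness among tracial states.

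\emph{Existence and uniqueness.} Existence is the state-extension lemma already invoked above. For uniqueness, let $\Phi$ be an arbitrary extension. By \cref{prop:trace-cyclic}, $\Omega_\Phi$ is cyclic for $\sigma_\Phi(C)$, and the vector state of $\Omega_\Phi$ on $C$ is $\tau$. GNS uniqueness for $\tau$ gives a unitary $U:H_\tau\to K_\Phi$ with
\[
 U\Omega_\tau=\Omega_\Phi,\qquad U\pi_\tau(a)U^*=\sigma_\Phi(a)\quad(a\in C).
\]
By \eqref{eq:trace-generators}, $\rho_\Phi(L_i)=U\bigl(\stlim_N\pi_\tau(s_{i,N})\bigr)U^*$, and similarly for the adjoints. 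The strong limit $\stlim_N\pi_\tau(s_{i,N})$ exists by \cref{lem:shell-calculus}. Call it $\Lambda_i\in\B(H_\tau)$. Then $U^*\rho_\Phi(\cdot)U$ is a representation $\rho_\tau$ of $A$ on $H_\tau$ that sends $j(a)\mapsto\pi_\tau(a)$ and $L_i\mapsto\Lambda_i$. Since $A$ is generated by these elements, $\rho_\tau$ does not depend on $\Phi$. Hence
\[
 \Phi(b)=\ip{\rho_\tau(b)\Omega_\tau}{\Omega_\tau}
\]
is determined, which gives uniqueness. The one point to check is that $\rho_\tau$ is well defined independently of $\Phi$. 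This is automatic once one extension exists, because every extension produces the same operators on generators.

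\emph{Traciality.} It suffices to show that each unitary $L_i$ lies in the centralizer, i.e. $\widetilde\tau(L_ibL_i^*)=\widetilde\tau(b)$ for all $b\in A$. Then $\widetilde\tau\circ\operatorname{Ad}L_i=\widetilde\tau$, and the centralizer is a $C^*$-subalgebra containing $j(C)$ (since $\tau$ is tracial and, by cyclicity, $\widetilde\tau$ is determined by its $j(C)$-data). The cleanest route is to use uniqueness. The state $\widetilde\tau\circ\operatorname{Ad}L_i$ restricts on $j(C)$ to
\[
 a\mapsto\ip{\Lambda_i^*\pi_\tau(a)\Lambda_i\Omega_\tau}{\Omega_\tau}.
\]
We show that this equals $\tau(a)$. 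Here $\Lambda_i\Omega_\tau=\lim_N\pi_\tau(s_{i,N})\Omega_\tau$, so the quantity is $\lim_N\tau(s_{i,N}^*as_{i,N})=\lim_N\tau(a(1-q_N))=\tau(a)$, using the tracial property of $\tau$ and \eqref{eq:finite-car-sums}; the error term is bounded via $\tau(q_N)=2^{-N}$. Thus $\widetilde\tau\circ\operatorname{Ad}L_i$ extends $\tau$, and by uniqueness it equals $\widetilde\tau$. The same argument applied to $L_i^*$ (with $p_{i,N}$ in place of $q_N$) gives invariance under $\operatorname{Ad}L_i^*$. Invariance under all inner automorphisms from the unitary group generated by $j(\U(C))$ and the $L_i$ then holds. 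Since this group spans a dense subalgebra, a standard argument gives $\widetilde\tau(ub)=\widetilde\tau(bu)$ for generating unitaries $u$, hence $\widetilde\tau(xy)=\widetilde\tau(yx)$ by linearity and continuity. I expect this step to be the main obstacle: the limit computation must be justified cleanly, with the strong limits applied only to vectors in $H_\tau$.

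\emph{Faithfulness and uniqueness of the trace.} Since $\widetilde\tau$ is tracial, $\{x:\widetilde\tau(x^*x)=0\}$ is a closed two-sided ideal. It is proper, and $A$ is simple by \cref{cor:simple}, so the ideal is zero and $\widetilde\tau$ is faithful. Consequently $\rho_\tau$ is faithful as well. Finally, any tracial state of $A$ restricts to a tracial state of $C$, which must be $\tau$ by uniqueness of the CAR trace. By the uniqueness of extensions established above, that tracial state equals $\widetilde\tau$.
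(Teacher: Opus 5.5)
Your proof is correct. The uniqueness, faithfulness, and trace-uniqueness parts coincide with the paper's: conjugate $\rho_\Phi$ back to $H_\tau$ by the GNS unitary, and note that the generator images $\pi_\tau(b)$ and $\stlim_N\pi_\tau(s_{i,N})$ do not depend on $\Phi$.

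The traciality step takes a genuinely different route. The paper uses uniqueness only for $\widetilde\tau\circ\operatorname{Ad}j(u)$, $u\in\U(C)$, to get $j(C)\subseteq Z_{\widetilde\tau}$. It then puts each $L_i$ into the centralizer by passing $\widetilde\tau(j(s_{i,N})a)=\widetilde\tau(aj(s_{i,N}))$ to the strong limit in the GNS representation. Finally it checks by hand that $Z_{\widetilde\tau}$ is a $C^*$-subalgebra.

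You instead apply uniqueness to $\widetilde\tau\circ\operatorname{Ad}L_i$ as well. You verify directly that its restriction to $j(C)$ is $\tau$, using $\tau(s_{i,N}^*as_{i,N})=\tau(a(1-q_N))$ and $\tau(q_N)=2^{-N}$. You then close up with the group generated by the invariant unitaries. Its linear span is automatically a dense unital $*$-subalgebra, so you need only that $Z_{\widetilde\tau}$ is a norm-closed subspace, not that it is an algebra.

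Your route handles every generator uniformly through the uniqueness theorem, at the cost of a second use of the flag traces. The paper's route avoids that computation but needs the algebra property of the centralizer.

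A few small points:
\begin{itemize}
\item State the invariance under $\operatorname{Ad}j(u)$ explicitly, as the same uniqueness argument applied with $\tau\circ\operatorname{Ad}u=\tau$. Your parenthetical ``by cyclicity'' does not justify $j(C)\subseteq Z_{\widetilde\tau}$ on its own.
\item The separate computation for $L_i^*$ is unnecessary, since invariance under an automorphism implies invariance under its inverse.
\item Your displayed restriction is $a\mapsto\widetilde\tau(L_i^*j(a)L_i)$ rather than $\widetilde\tau(L_ij(a)L_i^*)$. This is harmless, since you cover both directions.
\end{itemize}
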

\begin{proof}
We first identify every extension on the fixed CAR GNS space. This determines
the state uniquely; only then do we prove traciality from its centralizer,
and faithfulness from simplicity.

\emph{Uniqueness of the state extension.}
Existence follows from state extension. Fix an extension $\Phi$.
By \cref{prop:trace-cyclic}, the rule
\[
 T_\Phi\pi_\tau(b)\Omega_\tau
 =\sigma_\Phi(b)\Omega_\Phi\qquad(b\in C)
\]
extends to a unitary $T_\Phi:H_\tau\to K_\Phi$: its range is dense, and
\[
 \ip{\sigma_\Phi(b)\Omega_\Phi}{\sigma_\Phi(c)\Omega_\Phi}
 =\tau(c^*b)
 =\ip{\pi_\tau(b)\Omega_\tau}{\pi_\tau(c)\Omega_\tau}.
\]
It satisfies $T_\Phi\Omega_\tau=\Omega_\Phi$ and
$T_\Phi\pi_\tau(b)=\sigma_\Phi(b)T_\Phi$. Conjugating the existing
representation $\rho_\Phi$, define
\begin{equation}\label{eq:theta}
 \Theta_\Phi(a)=T_\Phi^*\rho_\Phi(a)T_\Phi\qquad(a\in A).
\end{equation}
Thus $\Theta_\Phi$ is a representation of the concrete algebra $A$, not
merely an assignment of operators to its generators. By
\eqref{eq:trace-generators}, its generator images are
\begin{equation}\label{eq:theta-images}
 \Theta_\Phi(j(b))=\pi_\tau(b),\qquad
 \Theta_\Phi(L_i)=\stlim_{N\to\infty}\pi_\tau(s_{i,N}).
\end{equation}
Both right sides are independent of $\Phi$. If $\Psi$ is another
extension, norm generation therefore gives $\Theta_\Phi=\Theta_\Psi$.
Since both GNS identifications preserve the cyclic vector,
\[
 \Phi(a)=\ip{\Theta_\Phi(a)\Omega_\tau}{\Omega_\tau}
        =\ip{\Theta_\Psi(a)\Omega_\tau}{\Omega_\tau}
        =\Psi(a)\qquad(a\in A).
\]
Write $\widetilde\tau$ for this unique extension.

\emph{Traciality.}
For $u\in\U(C)$, the state
$a\mapsto\widetilde\tau(j(u)^*aj(u))$ again restricts to $\tau$.
Uniqueness therefore gives
\[
 \widetilde\tau(j(u)^*aj(u))=\widetilde\tau(a)\qquad(a\in A).
\]
Replacing $a$ by $j(u)a$ yields
$\widetilde\tau(aj(u))=\widetilde\tau(j(u)a)$. Since unitaries linearly
span $C$ \cite[Exercise~1.11.16(4)--(5)]{Farah2019},
\begin{equation}\label{eq:car-centralizer}
 \widetilde\tau(j(b)a)=\widetilde\tau(aj(b))
 \qquad(b\in C,\ a\in A).
\end{equation}

In the GNS triple $(K,\rho,\Omega)$ of $\widetilde\tau$, recall
$s_{i,N}=\sum_{n=0}^{N-1}w_{i,n}$ and
$\rho(j(s_{i,N}))\to\rho(L_i)$ strongly. For $a\in A$,
\begin{align*}
 \widetilde\tau(L_i a)
 &=\ip{\rho(L_i)\rho(a)\Omega}{\Omega}\\
 &=\lim_{N\to\infty}\widetilde\tau(j(s_{i,N})a)\\
 &=\lim_{N\to\infty}\widetilde\tau(aj(s_{i,N}))
 =\ip{\rho(a)\rho(L_i)\Omega}{\Omega}
 =\widetilde\tau(aL_i).
\end{align*}
The limits use strong convergence on the two vectors $\rho(a)\Omega$
and $\Omega$, and the middle equality is \eqref{eq:car-centralizer}.
Thus the centralizer
\[
 Z_{\widetilde\tau}
 =\{x\in A:\widetilde\tau(xa)=\widetilde\tau(ax)
                    \text{ for every }a\in A\}
\]
contains $j(C)$ and all $L_i$. It is a norm-closed linear subspace containing
$1$, by continuity. For $x,y\in Z_{\widetilde\tau}$ and $a\in A$,
\[
 \begin{aligned}
 \widetilde\tau(xya)&=\widetilde\tau(yax)=\widetilde\tau(axy),\\
 \widetilde\tau(x^*a)
 &=\overline{\widetilde\tau(a^*x)}
  =\overline{\widetilde\tau(xa^*)}=\widetilde\tau(ax^*).
 \end{aligned}
\]
Hence it is a $C^*$-subalgebra. Since it contains every generator,
$Z_{\widetilde\tau}=A$, proving traciality.

\emph{Uniqueness of the trace and faithfulness.}
Every tracial state of $A$ restricts to the unique trace $\tau$ of $C$,
and hence equals $\widetilde\tau$. Since $A$ is simple by
\cref{cor:simple}, its tracial state is faithful
\cite[Corollary~4.1.4]{Farah2019}: $\widetilde\tau(a^*a)=0$ implies
$a=0$. The representation $\Theta_{\widetilde\tau}$ on $H_\tau$ is the
tracial GNS representation in these coordinates, and is faithful as well.
\end{proof}

\begin{corollary}[The tracial weak closure]\label{cor:weak-closure}
Put $R=\pi_\tau(C)''\subseteq\B(H_\tau)$. Then $R$ is a hyperfinite
$\mathrm{II}_1$ factor, and the faithful representation
$\Theta=\Theta_{\widetilde\tau}$ from \eqref{eq:theta} satisfies
\begin{equation}\label{eq:tracial-weak-closure}
 \pi_\tau(C)\subseteq\Theta(A)\subseteq R,\qquad \Theta(A)''=R.
\end{equation}
Thus $A$ is isomorphic to a strongly dense unital $C^*$-subalgebra of $R$.
\end{corollary}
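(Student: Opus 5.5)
First, $R=\pi_\tau(C)''$ is the hyperfinite $\mathrm{II}_1$ factor. This is the standard fact that the GNS weak closure of the CAR algebra at its unique trace is $\mathcal R$. The trace vector $\Omega_\tau$ is cyclic and separating for $R$. Since $\tau$ is the unique trace of $C$, the vector state $\omega_{\Omega_\tau}$ is a faithful normal trace on $R$, and every element of the center restricts to a multiple of it; hence $R$ is a factor. It is infinite-dimensional and finite, so it is of type $\mathrm{II}_1$. It is hyperfinite because it is the weak closure of the increasing union of the finite-dimensional algebras $\pi_\tau(C_n)$. I will quote this from the literature (e.g.\ Murray--von Neumann, or a standard text) rather than reprove it.

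Second, the inclusions. The first is immediate from \eqref{eq:theta-images}: $\Theta(j(b))=\pi_\tau(b)$. For the second, \eqref{eq:theta-images} gives $\Theta(L_i)=\stlim_N\pi_\tau(s_{i,N})$, and each $\pi_\tau(s_{i,N})$ lies in $\pi_\tau(C)\subseteq R$. A von Neumann algebra is strongly closed, so $\Theta(L_i)\in R$, and likewise $\Theta(L_i)^*\in R$. Since $\Theta$ is a $*$-homomorphism and $R$ is a norm-closed $*$-algebra, the generators determine everything: $\Theta(A)$ is the norm closure of $*$-polynomials in $\Theta(j(C))$ and the $\Theta(L_i)$, so $\Theta(A)\subseteq R$.

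Third, the weak closure. From $\pi_\tau(C)\subseteq\Theta(A)\subseteq R$ we get $R=\pi_\tau(C)''\subseteq\Theta(A)''\subseteq R''=R$, so $\Theta(A)''=R$. The bicommutant theorem, applicable because $\Theta$ is unital and hence nondegenerate, identifies $\Theta(A)''$ with the strong closure of $\Theta(A)$. Thus $\Theta(A)$ is strongly dense in $R$. Faithfulness of $\Theta$ from \cref{thm:trace-extension} then shows that $A\cong\Theta(A)$ is a strongly dense unital $C^*$-subalgebra of $R$.

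The main obstacle is only the identification of $R$ as the hyperfinite $\mathrm{II}_1$ factor, which is classical. Everything specific to $A$ reduces to the strong-limit formula \eqref{eq:theta-images} together with the strong closedness of $R$.
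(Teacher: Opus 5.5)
Your proposal is correct and takes essentially the same approach as the paper. The inclusions come from \eqref{eq:theta-images} together with strong (hence norm) closedness of $R$, $\Theta(A)''=R$ follows by sandwiching bicommutants, and the bicommutant theorem gives strong density. The only difference is that you cite the identification of $R$ as the hyperfinite $\mathrm{II}_1$ factor, whereas the paper proves it for completeness by the argument you sketch: the vector state of $\Omega_\tau$ is a faithful normal trace on $R$, and uniqueness of $\tau$ on $C$ together with normality rules out a nontrivial central projection.
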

\begin{proof}
By \eqref{eq:theta-images},
\[
 \Theta(j(b))=\pi_\tau(b)\quad(b\in C),\qquad
 \Theta(L_i)=\stlim_{N\to\infty}\pi_\tau(s_{i,N})\in R.
\]
Since $R$ is strongly closed and hence norm closed, norm generation of $A$
gives both inclusions in \eqref{eq:tracial-weak-closure}. Taking
bicommutants gives equality; the bicommutant theorem gives strong density.

For completeness, the matrix algebras $\pi_\tau(C_n)$ have strongly dense
union, so $R$ is hyperfinite and infinite-dimensional. Define the normal
vector state
\[
 \widehat\tau(x)=\ip{x\Omega_\tau}{\Omega_\tau}\qquad(x\in R).
\]
For each $a\in\pi_\tau(C)$, the normal functionals
\[
 x\longmapsto\widehat\tau(ax),\qquad
 x\longmapsto\widehat\tau(xa)
\]
agree on $\pi_\tau(C)$ because $\tau$ is tracial. Ultraweak density makes
them equal on $R$. Thus $\pi_\tau(C)$ is contained in the centralizer
\[
 Z_{\widehat\tau}
 =\{x\in R:\widehat\tau(xy)=\widehat\tau(yx)
                         \text{ for every }y\in R\}.
\]
For fixed $y\in R$, the functional
$x\mapsto\widehat\tau(xy)-\widehat\tau(yx)$ is normal, so
$Z_{\widehat\tau}$ is ultraweakly closed. It follows that
$Z_{\widehat\tau}=R$, proving traciality.

The state $\widehat\tau$ is faithful: if
$\widehat\tau(x^*x)=0$, then, for every $y\in R$,
\[
 \begin{aligned}
 \norm{xy\Omega_\tau}^2
 &=\widehat\tau(y^*x^*xy)\\
 &=\widehat\tau(xyy^*x^*)\\
 &\le\norm{y}^2\widehat\tau(xx^*)
  =\norm{y}^2\widehat\tau(x^*x)=0.
 \end{aligned}
\]
Cyclicity of $\Omega_\tau$ gives $x=0$.
If $z$ were a nontrivial central projection and
$t=\widehat\tau(z)\in(0,1)$, the functional
$b\mapsto t^{-1}\widehat\tau(z\pi_\tau(b))$ would be a tracial state of $C$.
Uniqueness of $\tau$ gives
\[
 \widehat\tau(z\pi_\tau(b))
 =t\tau(b)=t\widehat\tau(\pi_\tau(b))\qquad(b\in C).
\]
The two normal functionals $x\mapsto\widehat\tau(zx)$ and
$x\mapsto t\widehat\tau(x)$ agree on the ultraweakly dense subalgebra
$\pi_\tau(C)$, and hence on all of $R$:
\[
 \widehat\tau(zx)=t\widehat\tau(x)\qquad(x\in R).
\]
Taking $x=z$ gives $t=t^2$, a contradiction. Thus $R$ is a finite
infinite-dimensional factor,
and hence is of type~$\mathrm{II}_1$.
\end{proof}

\begin{proof}[Proof of \cref{thm:main}]
Take $A$ and $j$ from \eqref{eq:algebra}. By
\cref{prop:atomic,thm:capture,cor:simple}, this algebra is unital, simple,
and infinite-dimensional, and all its nonzero irreducible representations
are equivalent. The unique trace and the stronger state-extension
assertion are \cref{thm:trace-extension}. By \cref{prop:trace-cyclic}, its
GNS space is separable; its representation is faithful by simplicity.
All choices and arguments above are in ZFC. No diamond principle or
instance of CH is used.
\end{proof}

\section{Consequences}\label{sec:consequences}

\begin{corollary}\label{cor:no-separable-irreducible}
The algebra $A$ has no nonzero irreducible representation on a separable
Hilbert space. Its faithful tracial GNS representation has no nonzero
irreducible closed subrepresentation.
\end{corollary}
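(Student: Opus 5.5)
By \cref{thm:capture}, every nonzero irreducible representation $\rho:A\to\B(K)$ is unitarily equivalent to the inclusion $A\subseteq\B(H)$. It therefore suffices to show that $H=\bigoplus_{i\in I}H_i$ is nonseparable. The summands are mutually orthogonal and each contains the unit vector $\zeta_i$, so $\{\zeta_i\}_{i\in I}$ is an orthonormal family in $H$. Nonseparability will follow once we show that $I$ is uncountable, that is, that the CAR algebra $C$ has uncountably many unitary equivalence classes of irreducible representations.

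For this we invoke Glimm's theorem \cite{Glimm1961}, quoted in the introduction: a simple separable $C^*$-algebra that is not an algebra of compact operators has continuum many inequivalent irreducible representations. Since $C$ is unital and infinite-dimensional, it is not of the form $\K(K)$, so $|I|=\mathfrak c$.

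If one prefers to avoid citing Glimm, an explicit alternative is available. Take the product states $\bigotimes_n\omega_{x_n}$ with $x_n\in\{0,1\}$, where $\omega_0$ and $\omega_1$ are the two diagonal vector states on $M_2(\C)$. Two such product states with sequences differing in infinitely many coordinates give inequivalent GNS representations. For instance, their difference fails to tend to zero in norm on the tails $\bigotimes_{m\ge n}M_2(\C)$, while equivalent pure states must agree asymptotically on tails by Kadison transitivity/Glimm's lemma. Quotienting $\{0,1\}^{\N}$ by finite differences still leaves $\mathfrak c$ classes. In the written proof I would simply cite Glimm. This gives $\operatorname{dens}(H)\ge|I|>\aleph_0$, proving the first assertion.

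For the second assertion, let $\Theta=\Theta_{\widetilde\tau}$ on the separable space $H_\tau$ (\cref{prop:trace-cyclic}). Suppose $H_0\subseteq H_\tau$ is a nonzero closed subspace reducing $\Theta(A)$ on which $\Theta$ acts irreducibly. Then $a\mapsto\Theta(a)|_{H_0}$ is a nonzero irreducible representation of $A$ on a separable Hilbert space, since closed subspaces of separable Hilbert spaces are separable. This contradicts the first assertion. The argument needs no irreducibility of $\Theta$ itself, and the same reasoning applies verbatim to any separable representation of $A$.

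The main obstacle is only the cardinality input, namely that $I$ is uncountable. Everything else is bookkeeping from \cref{thm:capture}. I would rely on Glimm's dichotomy as cited rather than re-prove the inequivalence of product states.
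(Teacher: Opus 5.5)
Your proof is correct, but for the first assertion it takes a different route from the paper's. The paper uses only the abstract output of \cref{thm:capture}. A separable irreducible $\rho$ would be faithful, so $\rho(A)$ would act irreducibly on a separable space with a single irreducible class. Rosenberg's theorem would then force $\rho(A)=\K(K)$, contradicting \cref{cor:simple}.

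You instead use the concrete form of the unique class, $H=\bigoplus_{i\in I}H_i$, and count its summands by Glimm's theorem applied to $C$. This makes $\{\zeta_i\}_{i\in I}$ an orthonormal family of cardinality $\mathfrak c$. Your route bypasses Rosenberg's theorem and \cref{cor:simple}, and it gives more. Each $H_i$ is separable, since $\xi_i$ is cyclic for the separable algebra $C$. So you get $\operatorname{dens}(H)=|I|\cdot\aleph_0=\mathfrak c$ at once, which is the density statement of \cref{cor:cardinal} that the paper obtains later by a separate Cantor-set eigenvector argument.

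The paper's route, conversely, needs no knowledge of how many irreducible classes $C$ has. It shows the corollary is just the Rosenberg obstruction from the introduction, valid for any counterexample to Naimark's problem.

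Your product-state fallback is also sound, though the right justification is not Kadison transitivity. Let $(\pi_x,\xi_x)$ be the GNS pair of $\omega_x=\bigotimes_n\omega_{x_n}$, and approximate a unit vector by $\pi_x(b)\xi_x$ with $b\in C_m$. Product factorization then makes every vector state of $\pi_x$ approximately equal to $\omega_x$, uniformly on the unit ball of the tails beyond $m$. This is impossible for $\omega_y$ at any coordinate $n>m$ where $y_n\ne x_n$.

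The second assertion is handled exactly as in the paper.
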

\begin{proof}
A separable irreducible representation $\rho:A\to\B(K)$ would, by
\cref{thm:capture}, be faithful. Precomposition with the isomorphism
$\rho:A\to\rho(A)$ identifies the irreducible representations of $\rho(A)$
with those of $A$, preserving unitary equivalence. Thus $\rho(A)$ would
act irreducibly on a separable $K$ and have a unique irreducible class.
Rosenberg's theorem \cite[Theorem~4, p.~530]{Rosenberg} would give
$\rho(A)=\K(K)$, contrary to \cref{cor:simple}. A closed subspace of a separable
Hilbert space is separable, proving the second assertion.
\end{proof}

Write $\mathfrak c=2^{\aleph_0}$, and let $\operatorname{dens}(X)$ denote
the density character of a topological space $X$.

\begin{corollary}\label{cor:cardinal}
For the constructed algebra $A$, its faithful tracial GNS space $H_\tau$,
and the Hilbert space $H$ of the distinguished irreducible representation,
\[
 \begin{gathered}
 |A|=\operatorname{dens}(A)=\mathfrak c,\qquad
 \operatorname{dens}(H)=\mathfrak c,\\
 H_\tau\text{ is separable and infinite-dimensional}.
 \end{gathered}
\]
Consequently, $H_\tau$ has a countably infinite orthonormal basis, whereas
every orthonormal basis of $H$ has cardinality $\mathfrak c$.
\end{corollary}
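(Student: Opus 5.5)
The plan is to prove the cardinal equalities by squeezing each quantity between $\mathfrak c$ from both sides. The three inputs are the size of the index set $I$, the Akemann--Weaver lower bound, and the structure of the atomic model.

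\emph{Upper bounds.} The index set $I$ injects into the pure state space of $C$. Since $C$ is separable, a state is determined by its values on a countable dense set, so $|I|\le\mathfrak c^{\aleph_0}=\mathfrak c$. Each $H_i$ is separable because $C$ is separable and $\xi_i$ is cyclic, so $H=\bigoplus_i H_i$ has an orthonormal basis of cardinality at most $\aleph_0\cdot|I|\le\mathfrak c$. The algebra $A$ is generated by the countable dense subset of $\Pi(C)$ together with $\{L_i\}$, a set of cardinality at most $\mathfrak c$. Rational $*$-polynomials in these generators therefore form a dense subset of cardinality at most $\mathfrak c$, giving $\operatorname{dens}(A)\le\mathfrak c$. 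Each element of $A$ is a limit of a sequence from this dense set, so $|A|\le\mathfrak c^{\aleph_0}=\mathfrak c$. Separability of $H_\tau$ is \cref{prop:trace-cyclic}, and $H_\tau$ is infinite-dimensional because $\Theta$ is a faithful representation of the infinite-dimensional algebra $A$; alternatively, $\pi_\tau(C)$ is already infinite-dimensional.

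\emph{Lower bounds.} For $A$, I would invoke Akemann--Weaver Proposition~6 as quoted in the introduction: a counterexample cannot be generated by fewer than $\mathfrak c$ elements. By \cref{thm:main}, $A$ is a counterexample, so $\operatorname{dens}(A)\ge\mathfrak c$, and hence $|A|\ge\operatorname{dens}(A)\ge\mathfrak c$. A self-contained alternative is to bound $|I|$ from below. Glimm's theorem gives continuum many inequivalent irreducible representations of $C$, so $|I|=\mathfrak c$. The vectors $\zeta_i$ are pairwise orthogonal unit vectors in $H$, which gives $\operatorname{dens}(H)\ge\mathfrak c$ directly.

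Combining the two sides gives $\operatorname{dens}(H)=\mathfrak c$. Since $H$ is infinite-dimensional, its density character equals the cardinality of any orthonormal basis. For $H_\tau$, separability together with infinite dimension yields a countably infinite orthonormal basis. The only real subtlety is justifying $|I|=\mathfrak c$, meaning that the simple separable non-type-I algebra $C$ has exactly continuum many irreducible classes. The plan is to cite Glimm \cite{Glimm1961} (or \cite[\S5.5]{Farah2019}) for the lower bound and use the counting argument above for the upper bound.
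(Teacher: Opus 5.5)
Your proof is correct, but for the Hilbert space $H$ it takes a genuinely different route from the paper.

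\textbf{What the paper does.} It gets $|A|\le\mathfrak c$ from the faithful representation on the separable space $H_\tau$, so $|A|\le|\B(H_\tau)|=\mathfrak c$. It cites Akemann--Weaver for $\operatorname{dens}(A)\ge\mathfrak c$, exactly as you do. It never counts $I$. For $\operatorname{dens}(H)\ge\mathfrak c$ it uses the Cantor-set element $d=\sum_{n\ge1}2\cdot3^{-n}(1_2^{\otimes(n-1)}\otimes e_{11})\in C$. It extends each point evaluation at $t\in\operatorname{sp}(d)$ on $C^*(1,j(d))$ to a pure state of $A$. Then \cref{thm:capture} realizes every resulting GNS representation on $H$, which yields $\mathfrak c$ mutually orthogonal unit eigenvectors of $j(d)$. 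The upper bound $\operatorname{dens}(H)\le\mathfrak c$ comes from cyclicity together with $\operatorname{dens}(A)=\mathfrak c$.

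\textbf{What you do instead.} You read both bounds directly off the model $H=\bigoplus_{i\in I}H_i$ with separable summands. You count pure states to get $|I|\le\mathfrak c$ and invoke Glimm to get $|I|\ge\mathfrak c$.

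\textbf{Trade-offs.} Your route is shorter and bypasses \cref{thm:capture}. However, it depends on the specific atomic model and on Glimm's theorem as an external input. The paper's route needs only an elementary spectral computation. It also shows that the lower bound is forced by the one-class property itself, independently of how many irreducible classes $C$ has; it is the eigenvector argument behind \cite[Corollary~5.5.6]{Farah2019}.

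\textbf{A small point.} The alternative you call self-contained still rests on Glimm. As written, it also bounds $\operatorname{dens}(H)$ rather than $\operatorname{dens}(A)$. To let it replace Akemann--Weaver, add that $\operatorname{dens}(A)\ge\operatorname{dens}(H)$, because $\zeta_{\rt}$ is cyclic for the irreducible algebra $A\subseteq\B(H)$.
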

\begin{proof}
\emph{The algebra and the tracial representation.}
The scalar multiples of $1$ give $\mathfrak c\le|A|$. An operator on the separable space $H_\tau$ is determined by countably
many complex matrix coefficients. Hence $|\B(H_\tau)|\le\mathfrak c$;
the scalar operators give the reverse inequality. The faithful
representation therefore gives
\[
 \mathfrak c\le |A|\le |\B(H_\tau)|=\mathfrak c.
\]
Every norm-dense subset of $A$ generates $A$. The generator lower bound
of \cite[Proposition~6]{AW2004} therefore gives
\[
 \mathfrak c\le\operatorname{dens}(A)\le |A|=\mathfrak c.
\]
The space $H_\tau$ is separable by \cref{prop:trace-cyclic}, and it is
infinite-dimensional because its representation of $A$ is faithful.

\emph{The irreducible representation.}
For the lower bound on $\operatorname{dens}(H)$, we use the pure-state
eigenvector argument underlying \cite[Corollary~5.5.6]{Farah2019}.
Write $e_{11}$ for the second diagonal matrix unit of $M_2(\C)$.
In the diagonal CAR subalgebra $C(\{0,1\}^{\N})$, put
\[
 d=\sum_{n=1}^{\infty}2\cdot3^{-n}
       (1_2^{\otimes(n-1)}\otimes e_{11})\in C,
\]
a norm-convergent series. Its spectrum is the Cantor set
\[
 \operatorname{sp}(d)=
 \left\{\sum_{n=1}^{\infty}2\varepsilon_n3^{-n}:
                    (\varepsilon_n)\in\{0,1\}^{\N}\right\},
 \qquad |\operatorname{sp}(d)|=\mathfrak c.
\]
For each $t\in\operatorname{sp}(d)$, evaluation at $t$ on
$C^*(1,j(d))$ has a pure state extension $\psi_t$ to $A$
\cite[Lemma~5.4.1]{Farah2019}. Indeed, the states extending a pure state
form a nonempty weak$^*$-compact face of the state space; any extreme point
of this face is pure. In its GNS triple
$(K_t,\rho_t,\Omega_t)$,
\[
 \norm{(\rho_t(j(d))-t1)\Omega_t}^2
 =\psi_t((j(d)-t1)^2)=0.
\]
By \cref{thm:capture}, a unitary $U_t:K_t\to H$ intertwines $\rho_t$
with the inclusion. Thus $v_t=U_t\Omega_t$ satisfies
\[
 \norm{v_t}=1,\qquad j(d)v_t=tv_t.
\]
Self-adjointness of $j(d)$ gives
\[
 (t-s)\ip{v_t}{v_s}=0,\qquad
 \norm{v_t-v_s}=\sqrt2\quad(t\ne s).
\]
This uniformly separated family has cardinality $\mathfrak c$, so
$\operatorname{dens}(H)\ge\mathfrak c$. Conversely, any nonzero vector
in the irreducible representation on $H$ is cyclic. The orbit of a
norm-dense subset of $A$ of cardinality $\mathfrak c$ is therefore dense
in $H$, giving $\operatorname{dens}(H)\le\mathfrak c$.
For an infinite-dimensional Hilbert space, the density character equals
the cardinality of an orthonormal basis, proving the final assertions.
\end{proof}

\begin{corollary}\label{cor:ch}
Over ZFC, CH is equivalent to the existence of a counterexample to
Naimark's problem of norm density $\aleph_1$.
\end{corollary}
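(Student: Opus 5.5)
Both directions reduce to the density computation in \cref{cor:cardinal} together with the Akemann--Weaver lower bound \cite[Proposition~6]{AW2004}.

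For the forward direction, assume CH, so that $\mathfrak c=\aleph_1$. The algebra $A$ of \cref{thm:main} is a counterexample to Naimark's problem. Indeed, all its nonzero irreducible representations are unitarily equivalent by \cref{thm:capture}, and it is not isomorphic to any $\K(K)$ by \cref{cor:simple}. By \cref{cor:cardinal}, $\operatorname{dens}(A)=\mathfrak c=\aleph_1$. This direction is immediate and uses no further input.

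For the converse, suppose $B$ is a counterexample to Naimark's problem with $\operatorname{dens}(B)=\aleph_1$. Choose a norm-dense subset $X\subseteq B$ with $|X|=\aleph_1$. Then $X$ generates $B$ as a $C^*$-algebra. Proposition~6 of \cite{AW2004} says a counterexample cannot be generated by fewer than $\mathfrak c$ elements, so $\mathfrak c\le|X|=\aleph_1$. Cantor's theorem gives $\aleph_1\le\mathfrak c$, hence $\mathfrak c=\aleph_1$, which is CH. Both directions are ZFC arguments, so the equivalence holds over ZFC.

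The only point needing care is that the Akemann--Weaver bound is proved in ZFC and applies to arbitrary counterexamples, without any separability or representation hypothesis. It rests on Glimm-type arguments producing $\mathfrak c$ inequivalent pure states from any subalgebra with fewer than $\mathfrak c$ generators. The paper already invokes this bound in this form in the proof of \cref{cor:cardinal}, so I would quote it directly. I would also note that \enquote{norm density $\aleph_1$} is read as density character exactly $\aleph_1$. That reading matters only for the forward direction, and there \cref{cor:cardinal} gives equality.
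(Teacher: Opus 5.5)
Your proposal is correct and essentially matches the paper's proof. Under CH, \cref{cor:cardinal} gives $\operatorname{dens}(A)=\mathfrak c=\aleph_1$; conversely, the Akemann--Weaver bound \cite[Proposition~6]{AW2004}, applied to a norm-dense generating set of size $\aleph_1$, forces $\mathfrak c\le\aleph_1$, and your explicit check via \cref{thm:capture,cor:simple} that $A$ is a counterexample is a harmless addition the paper leaves implicit.
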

\begin{proof}
Under CH, \cref{cor:cardinal} supplies such a counterexample. Conversely,
any such algebra $B$ satisfies
\[
 \mathfrak c\le\operatorname{dens}(B)=\aleph_1
\]
by the density bound in \cite[Proposition~6]{AW2004}. Hence CH holds.
\end{proof}

\section*{Formalization}
The associated Lean~4 development, based on mathlib, is available in
MathlibAnnex v0.4.0 \cite{MathlibAnnex040}, with entry module
\nolinkurl{MathlibAnnex.Projects.Naimark}. The citation fixes the source
commit, not a moving branch. The companion project page gives the source
manifest, dependencies, and links to the declarations
\cite{NaimarkFormalizationProject}.

The released statements cover the fixed CAR-based algebra and its
classification of arbitrary nonzero irreducible representations
(\cref{thm:capture,cor:simple}); uniqueness of the extension of the CAR
trace among all states, traciality, faithfulness, and a faithful separable
model (\cref{thm:main,prop:trace-cyclic,thm:trace-extension}); the exclusion
of separable irreducible representations (\cref{cor:no-separable-irreducible});
and the cardinality, density, and CH statements
(\cref{cor:cardinal,cor:ch}). These are result-level correspondences,
not a line-by-line formalization of this article's proofs. The conclusions
about orthonormal-basis cardinalities in \cref{cor:cardinal} are standard
Hilbert-space consequences of the formal density statements, rather than
dedicated endpoints of the cited release. The weak-closure assertion in
\cref{cor:weak-closure} is proved in this article and is not included in
this claim of formalized scope.

The homogeneity declarations cover pure states with the same GNS kernel
on an arbitrary separable unital $C^*$-algebra and produce an
asymptotically inner automorphism implemented by a continuous unitary path
starting at $1$. For CAR, our proof uses only the simple-algebra,
finite-set approximately inner consequence in
\eqref{eq:approximate-inner}.\footnote{The relevant declarations are
\nolinkurl{exists_asymptoticallyInner_of_isPureState_of_sameGNSKernel}
and \nolinkurl{exists_approximatelyInner_of_isPureState}, in the namespace
\nolinkurl{MathlibAnnex.Analysis.CStarAlgebra} and module
\href{https://github.com/r-tanaka-math/mathlib-annex/blob/437e6e46228bbb8e91211ebded349d7a30020e73/MathlibAnnex/Analysis/CStarAlgebra/PureStateHomogeneity/Consequences.lean}{\nolinkurl{MathlibAnnex.Analysis.CStarAlgebra.PureStateHomogeneity.Consequences}}.}
This specifies the homogeneity scope claimed here, rather than every
result of \cite{KOS}. The homogeneity input and the
representation-theoretic density obstruction are proved in the
development, with reuse of mathlib, not added as analytic axioms.

The source manifest resolves the full project dependency closure. The
release records a Lean~4.32.0-rc1 build and compiled-axiom checks with
allowlist \texttt{propext}, \texttt{Classical.choice}, and
\texttt{Quot.sound}, and reports no violations. These checks concern the
released Lean declarations; they do not certify every natural-language
step of the present manuscript. The CH result is a mathematical
biconditional with CH expressed as a cardinal equality, not a forcing or
metatheoretic independence certificate.

\section*{AI use statement}
The new mathematical construction and proof arguments in this paper were
developed predominantly by OpenAI ChatGPT through an iterative process
supervised and reviewed by the author. Building on the classical results
cited in the text, ChatGPT contributed the CAR-based construction and
shell--defect mechanism in
Sections~\ref{sec:flags}--\ref{sec:construction}, in particular
\cref{lem:shell-calculus}; the classification of all irreducible
representations in \cref{thm:capture}; the state-extension and faithful
separable representation arguments in Section~\ref{sec:tracial}; and the
density and CH consequences in Section~\ref{sec:consequences}.
The weak-closure consequence in \cref{cor:weak-closure} was also proposed
during AI-assisted review. ChatGPT assisted with literature discovery,
drafting, and revision of the exposition. ChatGPT and Codex-based tools
were used to propose and develop Lean proof source and to revise candidate
code and proof obligations in the formalization workflow.

The author's contributions were to examine the arguments for mathematical
correctness and to propose improvements to their clarity and organization,
also with AI assistance. The author takes full responsibility for the
content of this paper, including its mathematical correctness, its final
presentation, and the integrity and accuracy of its citations.


\begin{thebibliography}{99}

\bibitem{AAP1986}
C.~A.~Akemann, J.~Anderson, and G.~K.~Pedersen,
\emph{Excising states of $C^*$-algebras},
Canad. J. Math. \textbf{38} (1986), no.~5, 1239--1260.
\href{https://doi.org/10.4153/CJM-1986-063-7}{doi:10.4153/CJM-1986-063-7}.

\bibitem{AW2004}
C.~A.~Akemann and N.~Weaver,
\emph{Consistency of a counterexample to Naimark's problem},
Proc. Natl. Acad. Sci. USA \textbf{101} (2004), no.~20, 7522--7525.
\href{https://doi.org/10.1073/pnas.0401489101}{doi:10.1073/pnas.0401489101}.

\bibitem{CF2023}
D.~Calder\'on and I.~Farah,
\emph{Can you take Akemann--Weaver's $\diamondsuit_{\aleph_1}$ away?},
J. Funct. Anal. \textbf{285} (2023), no.~5, article 110017.
\href{https://doi.org/10.1016/j.jfa.2023.110017}{doi:10.1016/j.jfa.2023.110017}.

\bibitem{Farah2019}
I.~Farah, \emph{Combinatorial Set Theory of $C^*$-algebras},
Springer Monographs in Mathematics, first ed., Springer, Cham, 2019.
\href{https://doi.org/10.1007/978-3-030-27093-3}{doi:10.1007/978-3-030-27093-3}.

\bibitem{FH2017}
I.~Farah and I.~Hirshberg,
\emph{Simple nuclear $C^*$-algebras not isomorphic to their opposites},
Proc. Natl. Acad. Sci. USA \textbf{114} (2017), no.~24, 6244--6249.
\href{https://doi.org/10.1073/pnas.1619936114}{doi:10.1073/pnas.1619936114}.

\bibitem{Glimm1961}
J.~Glimm, \emph{Type~I $C^*$-algebras},
Ann. of Math. (2) \textbf{73} (1961), 572--612.
\href{https://doi.org/10.2307/1970319}{doi:10.2307/1970319}.

\bibitem{KOS}
A.~Kishimoto, N.~Ozawa, and S.~Sakai,
\emph{Homogeneity of the pure state space of a separable $C^*$-algebra},
Canad. Math. Bull. \textbf{46} (2003), no.~3, 365--372.
\href{https://doi.org/10.4153/CMB-2003-038-3}{doi:10.4153/CMB-2003-038-3}.

\bibitem{MathlibAnnex040}
\emph{MathlibAnnex}, version~0.4.0, source release, September~20, 2026.
Entry module: \nolinkurl{MathlibAnnex.Projects.Naimark}.
Commit \href{https://github.com/r-tanaka-math/mathlib-annex/tree/437e6e46228bbb8e91211ebded349d7a30020e73}{\nolinkurl{437e6e46228bbb8e91211ebded349d7a30020e73}}.
\url{https://github.com/r-tanaka-math/mathlib-annex/releases/tag/v0.4.0}.

\bibitem{Naimark1951}
M.~A.~Naimark, \emph{On a problem of the theory of rings with involution},
Uspekhi Mat. Nauk (N.S.) \textbf{6} (1951), no.~6(46), 160--164 (Russian).

\bibitem{NaimarkBrief2026}
\emph{Naimark's problem},
Exact Mathematics with AI: Brief Report,
Draft~R1, September~21, 2026.
Publication responsibility: R.~Tanaka; no mathematical author designated.
\url{https://exactmathematics.org/reports/naimark-problem/draft-r1/naimark-brief-report.pdf}.

\bibitem{NaimarkFormalizationProject}
\emph{Naimark's problem}, MathlibAnnex companion project,
Exact Mathematics with AI, companion to source version~0.4.0, 2026.
\url{https://exactmathematics.org/mathlibannex/projects/naimark-problem/}.

\bibitem{Rosenberg}
A.~Rosenberg,
\emph{The number of irreducible representations of simple rings with no minimal ideals},
Amer. J. Math. \textbf{75} (1953), no.~3, 523--530.
\href{https://doi.org/10.2307/2372501}{doi:10.2307/2372501}.

\bibitem{Tanaka2025}
R.~Tanaka,
\emph{Banach space theoretical construction of (primitive) spectra of
$C^*$-algebras and the Naimark problem revisited},
preprint, arXiv:2504.05551v1, April~7, 2025.
\url{https://arxiv.org/abs/2504.05551v1}.
\href{https://doi.org/10.48550/arXiv.2504.05551}{doi:10.48550/arXiv.2504.05551}.

\bibitem{Vaccaro2018}
A.~Vaccaro, \emph{Trace spaces of counterexamples to Naimark's problem},
J. Funct. Anal. \textbf{275} (2018), no.~10, 2794--2816.
\href{https://doi.org/10.1016/j.jfa.2018.06.018}{doi:10.1016/j.jfa.2018.06.018}.
See also the corrigendum \cite{Vaccaro2020}.

\bibitem{Vaccaro2020}
A.~Vaccaro,
\emph{Corrigendum to ``Trace spaces of counterexamples to Naimark's problem''},
J. Funct. Anal. \textbf{278} (2020), no.~12, article 108458.
\href{https://doi.org/10.1016/j.jfa.2020.108458}{doi:10.1016/j.jfa.2020.108458}.

\bibitem{Weaver2007}
N.~Weaver, \emph{Set theory and $C^*$-algebras},
Bull. Symbolic Logic \textbf{13} (2007), no.~1, 1--20.
\href{https://doi.org/10.2178/bsl/1174668215}{doi:10.2178/bsl/1174668215}.

\end{thebibliography}
\end{document}